\documentclass[11pt,reqno]{amsart}
\usepackage[utf8]{inputenc}
\usepackage{setspace}

\usepackage{amsmath,amssymb,amsthm,mathrsfs,graphicx,caption,booktabs,multirow,adjustbox,makecell}
\usepackage[margin=3.4cm]{geometry}
\usepackage{amsthm}

\newtheoremstyle{tight}
{0pt}   
{0pt}   
{}      
{}      
{\bfseries} 
{.}     
{0.5em} 
{}      

\theoremstyle{tight}

\newtheorem{theorem}{Theorem}[section]
\newtheorem{example}{Example}[section]
\newtheorem{definition}{Definition}[section]
\newtheorem{corollary}{Corollary}[section]
\newtheorem{lemma}{Lemma}[section]
\newtheorem{remark}{Remark}[section]
\newtheorem{property}{Property}[section]
\title[Cumulative Residual Mathai--Haubold Entropy]{Nonparametric Inference for Cumulative Residual Mathai--Haubold Entropy of order $\alpha$ }

\author[]{A\lowercase{nija} C.R.\lowercase{\textsuperscript{a}} , S\lowercase{mitha} S.\lowercase{\textsuperscript{a}} \lowercase{and} S\lowercase{udheesh} K. K\lowercase{attumannil.\textsuperscript{b}}
\\
\lowercase{\textsuperscript{a}}K\lowercase{uriakose} E\lowercase{lias} C\lowercase{ollege},
  M\lowercase{annanam},
  K\lowercase{erala},
  I\lowercase{ndia,}\\
\lowercase{\textsuperscript{b}}I\lowercase{ndian} S\lowercase{tatistical} I\lowercase{nstitute},
  C\lowercase{hennai}, I\lowercase{ndia.}}
  \thanks{Corresponding author email: skkattu@isichennai.res.in}
\begin{document}
\maketitle
\doublespacing
\vspace{-0.2in}

\begin{abstract}
 In this paper, we study the properties of cumulative residual Mathai--Haubold entropy of order $\alpha$. A dynamic version of this entropy measure is then proposed, and its properties are examined within the framework of reliability modeling. We show that the dynamic cumulative residual Mathai--Haubold entropy of order $\alpha$ uniquely determines the survival function. Characterization results for the exponential and generalized Pareto distributions are derived using the proposed measure. Furthermore, we develop nonparametric estimators for the cumulative residual Mathai--Haubold entropy and its dynamic counterpart of order $\alpha$, based on the kernel estimation of the survival function. The performance of these estimators is evaluated through a Monte Carlo simulation study. Finally, the practical relevance of the proposed dynamic estimator is illustrated using two real data; failure-time data from aircraft windshields and failure-time data from 40 randomly selected mechanical switches.
\end{abstract}
\doublespacing
\section{Introduction}
Information measures play a central role in probability theory, reliability analysis, information theory, signal processing, machine learning, economics, physics, and biological sciences by quantifying uncertainty associated with random variables and lifetime distributions.
Measuring the uncertainty of a random variable is a fundamental problem in information theory. In this context, Shannon (1948) introduced entropy, which lays the foundation of information theory and quantifies the average uncertainty or information content associated with the outcomes of a random variable.

 Let $X$ be a non-negative random variable with probability density function $f(x)$. The Shannon entropy of $X$ is defined as
\begin{align}\label{entropy}
H(X) = -\int_{0}^{\infty} f(x) \log f(x) \, dx = E[-\log f(X)],
\end{align}
where $\log$ denotes the natural logarithm.
\par Several generalizations of Shannon entropy have been proposed in the literature through the introduction of additional parameters, making these measures more sensitive to different shapes of probability distributions. In this context, Mathai and Haubold (2006) introduced a generalized information measure, known as the Mathai--Haubold entropy (MHE), defined as
\begin{align}\label{MHE}
M_{\alpha}(X) = \frac{1}{\alpha-1}\left( \int_{0}^{\infty} f^{2-\alpha}(x)\,dx - 1 \right),
\qquad \alpha \neq 1,\; 0<\alpha<2.
\end{align}

 The above measure can also be written as
 \begin{align}\label{reMHE}
M_\alpha(X)= \frac{1}{\alpha-1}\left({\int_0^\infty} f^{2-\alpha}(x)\,dx  - {\int_0^\infty}f(x) \, dx\right), \alpha \neq 1, 0<\alpha<2.
 \end{align}
The generalizing parameter $\alpha$ provides greater flexibility in quantifying uncertainty, making the MHE a more general measure than Shannon’s entropy.
 Further, when $\alpha \to 1$, $M_\alpha(X)$  reduces to Shannon’s entropy given in (\ref{entropy}).

 \par The uncertainty associated with the remaining lifetime of a component can be quantified using residual entropy, introduced by Ebrahimi (1996). It is defined as
\begin{align}\label{residual entropy}
H(X;t) = -\int_{t}^{\infty} \frac{f(x)}{\bar{F}(t)}
\log\!\left( \frac{f(x)}{\bar{F}(t)} \right) dx,
\end{align}
where $\bar{F}(t) = 1 - F(t)$ represents the survival function. The residual entropy function has applications in lifetime data analysis, reliability modeling, and the study of survival and aging properties.

\par As an extension of the Mathai--Haubold entropy to age-dependent settings, Dar and Al--Zahrani (2013) introduced the Mathai--Haubold residual entropy to model uncertainty in the remaining lifetime of a system, which is defined as
\begin{align}
 M_\alpha(X;t)= \frac{1}{\alpha-1}\left( {\int_t^\infty} \left(\frac{{f}(x)}{\bar{F}(t)}\right)^{2-\alpha} \,dx -1\right), \alpha \neq 1, 0<\alpha<2,t>0.
\end{align}
\par Several information measures that were originally defined in terms of probability density functions have subsequently been reformulated using the survival function. This shift is motivated by the fact that the survival function offers a more intuitive and practically useful representation of system lifetime behavior, particularly in the presence of censoring or truncation. In this context, Rao et al. (2004) introduced an alternative entropy measure based on the survival function, referred to as the Cumulative Residual Entropy (CRE) and is given by
\begin{align}\label{CRE}
     CRE(X)=-\int_{0}^{\infty} \bar{F}(x) \log \bar{F}(x) \,dx.
\end{align}

  The CRE is always non-negative and has practical applications in fields such as reliability analysis and image alignment.
 \par Asadi and Zohrevand (2007) proposed a modification of the cumulative residual entropy (CRE), referred to as the dynamic cumulative residual entropy and is defined as
 \begin{align}\label{CREt}
 CRE(X;t)=-\int_{t}^{\infty} \frac{\bar{F}(x)} {\bar{F}(t)} \log \left( \frac{\bar{F}(x)}{\bar{F}(t)} \right) dx.
 \end{align}
  \par Smitha et al. (2023) proposed an alternative definition based on the survival function of the entropy generating function introduced by Golomb (1966) namely cumulative residual entropy generating function (CREGF). They also proposed its dynamic version called dynamic cumulative residual entropy generating function (DCREGF) and is given by\\
  \begin{align}\label{C(s)}
  C_s{(X;t)}=\int_t^\infty\left(\frac{\bar{F}(x)}{\bar{F}(t)}\right)^s dx, s>0.
 \end{align}

 Sati and Gupta (2015) proposed the cumulative residual Tsallis entropy and its dynamic version as extensions of the Tsallis entropy (Tsallis, 1988). These measures are defined respectively by
\begin{align}\label{eta}
\eta_{\alpha}(X)
= \frac{1}{\alpha-1}
\left[
1-\int_{0}^{\infty} (\bar{F}(x))^{\alpha}\,dx
\right],
\qquad \alpha>0,\; \alpha\neq 1,
\end{align}
and
\begin{align}\label{eta t}
\eta_{\alpha}(X;t)
= \frac{1}{\alpha-1}
\left[
1-\int_{t}^{\infty}
\left(
\frac{\bar{F}(x)}{\bar{F}(t)}
\right)^{\alpha}
\,dx
\right],
\qquad \alpha>0,\; \alpha\neq 1.
\end{align}

Subsequently, Rajesh and Sunoj (2016) introduced alternative forms of the cumulative residual Tsallis entropy and its dynamic counterpart corresponding to \eqref{eta} and \eqref{eta t}. These measures are given by
\begin{align}\label{a eta}
\psi_{\alpha}(X)
= \frac{1}{\alpha-1}
\int_{0}^{\infty}
\left(
\bar{F}(x)-(\bar{F}(x))^{\alpha}
\right)dx,
\qquad \alpha>0,\; \alpha\neq 1,
\end{align}
and
\begin{align}\label{a eta t}
\psi_{\alpha}(X;t)
= \frac{1}{\alpha-1}
\int_{t}^{\infty}
\left(
\frac{\bar{F}(x)}{\bar{F}(t)}
-
\left(
\frac{\bar{F}(x)}{\bar{F}(t)}
\right)^{\alpha}
\right)dx,
\qquad \alpha>0,\; \alpha\neq 1.
\end{align}

 Recently, Anija et al. (2025) introduced and extensively studied the cumulative residual Mathai--Haubold entropy and its dynamic counterpart respectively as
\begin{align}\label{CRM}
\mathrm{CRM}_{\alpha}(X)
&=
\frac{1}{\alpha-1}
\left(
\int_{0}^{\infty}
(\bar{F}(x))^{\,2-\alpha}\,dx
-1
\right),
\qquad
\alpha \neq 1,\; 0<\alpha<2,
\end{align}
and
\begin{align}\label{CRMt}
\mathrm{CRM}_{\alpha}(X_t)
&=
\frac{1}{\alpha-1}
\left(
\int_{t}^{\infty}
\left(
\frac{\bar{F}(x)}{\bar{F}(t)}
\right)^{\,2-\alpha}
\,dx
-1
\right),
\qquad
\alpha \neq 1,\; 0<\alpha<2,\; t>0.
\end{align}


Recently, Alnssyan and Dar (2026)  developed and analyzes a Mathai–Haubold entropy framework based on cumulative distributions and order-statistic representations.

In the present paper, we
investigate several additional properties and applications that further demonstrate its relevance in reliability theory and lifetime data analysis. We also introduce and study its residual counterpart.
The paper is organized as follows. In Section 2, we studies the properties of  cumulative residual Mathai--Haubold entropy of order $\alpha$, while Section 3 presents the dynamic cumulative residual Mathai–Haubold entropy  of order $\alpha$ , discusses its properties and shows that it uniquely determines the distribution. In Section 4, we characterizes exponential distribution and GPD  using the proposed measure while Section 5 deals with new classes of lifetime distributions and introduces a hazard rate ordering based on the DCRMHE of $\alpha$. In section 6, we focus on the nonparametric kernel estimation of CRMHE and DCRMHE of order $\alpha$. For analyzing the performance of the proposed estimators, Section 7 presents Monte Carlo simulation studies. Section 8 deals with a real data application of failure times of aircraft windshields. Finally, Section 9 provides the summary and outlook.
\par
\section{Cumulative residual  Mathai--Haubold  entropy of order $\alpha$ \((\zeta_{\alpha}(X)\))
}

We note that Alnssyan and Dar (2026) introduced the Cumulative Mathai--Haubold Entropy (CMHRE), which is the same measure referred to herein as the cumulative residual Mathai--Haubold entropy (CRMHE) of order $\alpha$, as given in Definition 2.1. In this section, we investigate in detail several fundamental properties of this measure and establish results that highlight its applicability in the analysis of lifetime distributions.

 \begin{definition}
 For a continuous non-negative random variable $X$ with survival function $\bar{F}(x)$, cumulative residual Mathai--Haubold entropy of order $\alpha$ is denoted by $\zeta_{\alpha}(X)$, and is defined as
\begin{equation}\label{ACRM}
\begin{split}
\zeta_{\alpha}(X)
&= \frac{1}{\alpha-1}\left(
    \int_0^\infty (\bar{F}(x))^{2-\alpha}\,dx
    - \int_0^\infty \bar{F}(x)\,dx
\right) \\
&= \frac{1}{\alpha-1}\left(
    \int_0^\infty (\bar{F}(x))^{2-\alpha}\,dx
    - \mu
\right), 0<\alpha<2, \alpha\neq 1.
\end{split}
\end{equation}

\end{definition}
where
\[
\mu=\int_0^\infty \overline F(x)\,\mathrm{d}x.
\]
It is to be noted that $\zeta_{\alpha}(X)\geq0,for ~ 0<\alpha<2.$\\
 The mean residual life (MRL) function of a non-negative random variable  $X$ with survival function $\overline F(x)$ is given by
\[
m(t)= {E}[X-t\mid X>t]
=\frac{\int_t^\infty \overline F(u)\,\mathrm{d}u}{\overline F(t)},
\]
whenever $\overline F(t)>0$.\\
Also note that
 \[
\left. \zeta^{'}_{\alpha}(X)\right|_{\alpha=0}=\int_0^\infty (\bar F(x))^2 \log \bar F(x) \,\mathrm{d}x,
\]
where, the prime denote the derivative.\\
 Next property shows the relationship of $\zeta_{\alpha}(X)$ with the mean residual life function.
\begin{property}
From (\ref{ACRM}),we can observe that
\[
\begin{split}
\zeta_{\alpha}(X)
&=\frac{1}{1-\alpha}\Biggl(\mu
-\int_{0}^{\infty}(\bar{F}(x))^{2-\alpha}\,\mathrm{d}x\Biggr)\\
&=\frac{1}{1-\alpha}\Biggl(\mu
+\int_{0}^{\infty}
\left(\frac{\mathrm{d}}{\mathrm{d}x}\big(m(x)\,\bar{F}(x)\big)\right)
(\bar{F}(x))^{1-\alpha}\,\mathrm{d}x\Biggr)\\
&= {E} \!\big[m(X)\,(\bar{F}(X))^{1-\alpha}\big].
\end{split}
\]

\end{property}
\begin{example}
Suppose $X$ follows exponential distribution with survival function $\bar{F}(x)=e^{-\lambda x},x>0,\lambda>0$, then $\zeta_{\alpha}(X)=\frac{1}{\lambda(2-\alpha)}$, $m(x)=\frac{1}{\lambda}$ and $E(m(X)~(\bar{F}(X))^{1-\alpha})=\frac{1}{\lambda(2-\alpha)}.$
\end{example}
\begin{example}
    Suppose X have uniform distribution with survival function $\bar{F}(x)=1-\frac{x}{a}, 0<x<a,a>0$, then $\zeta_{\alpha}(X)= \frac{a}{2(3-\alpha)}$, $m(x)=\frac{a-x}{2}$ and $E(m(X)~(\bar{F}(X))^{1-\alpha})=\frac{a}{2(3-\alpha)}.$
\end{example}
\begin{corollary}
    Even if (\ref{MHE}) and (\ref{reMHE}) are equal, the corresponding cumulative Mathai--Haubold entropy measures given in (\ref{CRM}) and  (\ref{ACRM}) are not same as $\zeta_{\alpha}(X)= CRM_{\alpha}(X)+\frac{1-\mu}{\alpha-1}$.
\end{corollary}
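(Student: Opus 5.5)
The identity $\zeta_{\alpha}(X)= CRM_{\alpha}(X)+\frac{1-\mu}{\alpha-1}$ follows by subtracting the two definitions. Both \eqref{CRM} and \eqref{ACRM} contain the common integral
\[
I_\alpha=\int_0^\infty (\bar F(x))^{2-\alpha}\,dx .
\]
It is finite for $0<\alpha<2$ whenever $\mu<\infty$, since $2-\alpha>0$. For $1<\alpha<2$ we have $2-\alpha\in(0,1)$, and finiteness of $I_\alpha$ is an additional integrability requirement, which I take as part of the standing assumption that the measures are defined. Subtracting \eqref{CRM} from \eqref{ACRM} and using $\mu=\int_0^\infty \bar F(x)\,dx$, the $I_\alpha$ terms cancel:
\[
\zeta_{\alpha}(X)-CRM_{\alpha}(X)=\frac{1}{\alpha-1}\bigl(I_\alpha-\mu\bigr)-\frac{1}{\alpha-1}\bigl(I_\alpha-1\bigr)=\frac{1-\mu}{\alpha-1}.
\]
Rearranging gives the stated relation.

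To explain why the density-based versions coincide while the survival-based ones do not, I would note that \eqref{MHE} and \eqref{reMHE} agree because $\int_0^\infty f(x)\,dx=1$. In the cumulative versions, $f$ is replaced by $\bar F$, and $\int_0^\infty \bar F(x)\,dx=\mu$ is in general not $1$. So the "$-1$" in \eqref{CRM} and the "$-\mu$" in \eqref{ACRM} are different constants, and the two measures differ by $\frac{1-\mu}{\alpha-1}$.

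To confirm that they are genuinely not the same, I would exhibit a case with $\mu\neq 1$. The exponential example with $\lambda\neq 1$ works: there $\mu=1/\lambda$, so $CRM_\alpha(X)=\frac{1}{\alpha-1}\bigl(\frac{1}{\lambda(2-\alpha)}-1\bigr)$, whereas the earlier example gives $\zeta_\alpha(X)=\frac{1}{\lambda(2-\alpha)}$. These differ exactly by $\frac{1-1/\lambda}{\alpha-1}\neq 0$. The two measures coincide precisely when $\mu=1$.

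There is no real obstacle here. The only point requiring care is the finiteness of $I_\alpha$ for $1<\alpha<2$, so that the subtraction is legitimate.
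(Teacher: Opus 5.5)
Your proof is correct and is exactly the direct subtraction of \eqref{CRM} from \eqref{ACRM} that the paper leaves implicit (it gives no separate proof). Your exponential example with $\lambda\neq 1$ is a useful addition confirming the two measures differ unless $\mu=1$; one small slip is your opening claim that $I_\alpha<\infty$ for all $0<\alpha<2$ whenever $\mu<\infty$ ``since $2-\alpha>0$'', which is false for $1<\alpha<2$ (the valid reason for $0<\alpha<1$ is $(\bar F)^{2-\alpha}\le \bar F$ because $2-\alpha>1$), though your next sentence effectively corrects it.
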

The following property shows that $\zeta_{\alpha}(X)$ is a shift-independent measure.
\begin{property}
 Let $X$ be continuous non negative random variable and  $Y=a X+b$ with $a>0$ and $b \geq 0$ then,
$$
\zeta_{\alpha}(Y)=a~\zeta_{\alpha}(X).
$$
\end{property}

\noindent{\bf Proof:}  The proof follows using the fact that $\bar{F}_{a X+b}(x)=\bar{F}_{X}\left(\frac{x-b}{a}\right)$ for all $x>b$.


Next we obtain bounds for $\zeta_{\alpha}(X)$ in terms of CRE(X).
\begin{property}
    Let $X$ be a non negative continuous random variable with survival function $\bar{F}(x)$ then, $\zeta_{\alpha}(X)\leq(\geq)CRE(X)$  if $o<\alpha<1(1<\alpha<2)$, where CRE(X) is given in (\ref{CRE}).
\end{property}
\begin{proof}
For $o<\alpha<1(1<\alpha<2)$, we have\\

 \begin{align}
\zeta_{\alpha}(X)
&= \frac{1}{\alpha-1}
\int_0^\infty
\left( (\bar{F}(x))^{2-\alpha}
- \bar{F}(x) \right)\,dx \nonumber\\
&= \frac{1}{1-\alpha}
\int_0^\infty
\bar{F}(x)
\left( 1 - (\bar{F}(x))^{1-\alpha} \right)\,dx \nonumber\\
&\leq(\geq)-\int_{0}^{\infty}
\bar{F}(x)\log \bar{F}(x)\,dx \nonumber\\
&= \mathrm{CRE}(X).
\end{align}

\end{proof}
\begin{corollary}
From the above property, it is clear that, if $a=1$
\[
\zeta_{\alpha}(Y)=\zeta_{\alpha}(X).
\]
\end{corollary}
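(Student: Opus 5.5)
The corollary is the special case $a=1$ of the scaling property $\zeta_{\alpha}(aX+b)=a\,\zeta_{\alpha}(X)$. Although it appears after the CRE bound, it really follows from the shift-independence property. I will therefore prove it directly with the substitution argument behind that property, setting $a=1$, so that $Y=X+b$ with $b\ge 0$.

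\textbf{Step 1.} For $y>b$ we have $\bar F_Y(y)=P(X+b>y)=\bar F_X(y-b)$. Since $X\ge 0$, for $0\le y\le b$ we have $\bar F_Y(y)=1$.

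\textbf{Step 2.} Split each integral in (\ref{ACRM}) at $b$. On $[0,b]$ the integrands $(\bar F_Y)^{2-\alpha}$ and $\bar F_Y$ both equal $1$. Each therefore contributes $b$, and these contributions cancel in the difference
\[
\int_0^\infty(\bar F_Y)^{2-\alpha}\,dy-\int_0^\infty\bar F_Y\,dy .
\]
This cancellation is why the form (\ref{ACRM}), which subtracts $\mu$, is shift-invariant, while $\mathrm{CRM}_\alpha$, which subtracts $1$, is not.

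\textbf{Step 3.} On $(b,\infty)$, substitute $x=y-b$. This gives
\[
\int_b^\infty\Big((\bar F_X(y-b))^{2-\alpha}-\bar F_X(y-b)\Big)\,dy=\int_0^\infty\Big((\bar F_X(x))^{2-\alpha}-\bar F_X(x)\Big)\,dx .
\]
Dividing by $\alpha-1$ yields $\zeta_\alpha(Y)=\zeta_\alpha(X)$.

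\textbf{Main obstacle.} The only delicate point is ensuring the integrals are finite, so that splitting and cancelling them is legitimate. For $1<\alpha<2$ we have $2-\alpha<1$, so $(\bar F)^{2-\alpha}\ge\bar F$, and integrability of $(\bar F)^{2-\alpha}$ is a genuine assumption, namely that $\zeta_\alpha(X)$ is finite. To sidestep this, I will work throughout with the single combined integrand $(\bar F)^{2-\alpha}-\bar F$, which vanishes identically on $[0,b]$. Then no separate finiteness is needed, and the identity holds in the extended sense whenever $\zeta_\alpha(X)$ is defined.
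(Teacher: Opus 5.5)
Your proof is correct and follows the paper's route. The corollary is the case $a=1$ of the property $\zeta_{\alpha}(aX+b)=a\,\zeta_{\alpha}(X)$, which the paper proves via the same substitution $\bar{F}_{aX+b}(x)=\bar{F}_X\!\left(\frac{x-b}{a}\right)$ for $x>b$; you are right that it comes from that property rather than the CRE bound it is placed after, and your check that $(\bar{F}_Y)^{2-\alpha}-\bar{F}_Y$ vanishes on $[0,b]$ fills in a step the paper's one-line proof leaves implicit.
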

\begin{property}
The cumulative residual Mathai--Haubold entropy of order $\alpha$, $\zeta_{\alpha}(X)$ of a non-negative random variable $X$ can be expressed in terms of the cumulative residual Mathai--Haubold entropy of its equilibrium random variable $X_E$.
\end{property}

\begin{proof}
 Let $X$ be a non-negative random variable with density function $f(x)$, and let $X_E$ be the equilibrium random variable corresponding to $X$ with density function
\[
g(x)=\frac{\bar{F}(x)}{E(X)}, \qquad x \ge 0,
\]
where $\bar{F}(x)=1-F(x)$ and $\mu=E(X)$.
Using~(\ref{reMHE}), we have
\[
(\alpha-1) M_{\alpha}(X_E)
= \int_{0}^{\infty}
\left( \frac{\bar{F}(x)}{\mu} \right)^{2-\alpha} \, dx
- \int_{0}^{\infty}
\frac{\bar{F}(x)}{\mu} \, dx .
\]
Since
\[
\int_{0}^{\infty} \bar{F}(x)\,dx=\mu,
\]
the above expression simplifies to
\[
(\alpha-1) M_{\alpha}(X_E)
= \int_{0}^{\infty}
\left( \frac{\bar{F}(x)}{\mu} \right)^{2-\alpha} \, dx - 1.
\]
Consequently, the relationship between $M_{\alpha}(X)$ and $M_{\alpha}(X_E)$ is given by
\[
(\alpha-1) M_{\alpha}(X)
= \mu^{\,2-\alpha}
\bigl[(\alpha-1) M_{\alpha}(X_E) - 1\bigr]
- \mu .
\]

\end{proof}


 Under the Proportional hazards  model assumption, the survival functions of the random variables $X$ and $X_{\theta}^*$ satisfy the following relationship
  $$\bar{F}_\theta^*(x)=(\bar{F}(x))^\theta, \theta>0,x \in R.$$
   The following property establishes a relationship between $\zeta_{\alpha}(X)$ and cumulative Tsallis entropy of order $\alpha$ proposed by Rajesh and Sunoj (2016).
\begin{property}
$(\alpha-1)~\zeta_{\alpha}(X_\theta^*)=(\beta-1)~\zeta_{\beta}(X)+(\theta-1)~\psi_{\theta}(X)$,
where $\beta=2-\theta(2-\alpha)$ and $\psi_{\theta}(X)$ is the  cumulative
residual Tsallis entropy of order $\theta$ given in (\ref{a eta}).
\end{property}
\begin{proof}
   We have
  \[ (\alpha-1)~\zeta_{\alpha}(X_\theta^*)= {\int_0^\infty} (\bar{F}(x))^{\theta(2-\alpha)} \,dx -{\int_0^\infty} (\bar{F}(x))^{\theta} \,dx
  \]
\[
\qquad\qquad\qquad\qquad=({\beta-1})\zeta_{\beta}(X)+({\theta-1})\psi_{\theta}(X), where~ \beta=2-\theta(2-\alpha),
\]
as required.
\end{proof}
\begin{example}
Consider a series system consisting of $n$ components with independent and identically distributed lifetimes $X_i$, $i=1,2,\ldots,n$, having common distribution function $F$. Let $X_{1:n}$ denote the first order statistic based on the random sample $X_1,X_2,\ldots,X_n$ from $F$. Then the survival function of $X_{1:n}$ is given by
\[
\bar{F}_{X_{1:n}}(x)=\bigl(\bar{F}(x)\bigr)^n.
\]
Observe that the component lifetime $X_i$, $i=1,2,\ldots,n$, and the system lifetime $X_{1:n}$ satisfy the proportional hazards model. Further, in view of the cumulative residual Mathai--Haubold entropy of order $\alpha$, we have
\[
(\alpha-1)\zeta_{\alpha}(X_{1:n})
= (\beta-1)\zeta_{\beta}(X) + (n-1)\psi_{n}(X),
\qquad \text{where } \beta = 2-n(2-\alpha).
\]
Hence, the above relation follows as claimed in Property~2.5.
\end{example}

\begin{example}
Suppose the random variable
$X$ follows exponential distribution with parameter $\lambda$ then


$(\alpha-1)\zeta_\alpha(X_\theta^*)=\frac{1}{\lambda\theta(2-\alpha)}-\frac{1}{\lambda \theta}$, $(\theta-1)\psi_{\theta}(X)=\frac{1}{\lambda}-\frac{1}{\lambda \theta}$ and $(\beta-1)\zeta_{\beta}(X)=\frac{1}{\lambda(2-\beta)}-\frac{1}{\lambda}$, where $2-\beta=\theta(2-\alpha)$. So we have $(\alpha-1)~\zeta_{\alpha}(X_\theta^*)=(\beta-1)~\zeta_{\beta}(X)+(\theta-1)~\psi_{\theta}(X)$.
\end{example}




\section{Dynamic cumulative residual Mathai--Haubold entropy(DCRMHE) of order $\alpha$}
The residual lifetime of a component or a system is a fundamental concept in reliability theory and survival analysis.
In reliability and life-testing experiments, the observed data are often truncated, and in such situations,
information measures defined for the entire lifetime distribution may not adequately capture the uncertainty associated
with the remaining life. In particular, the cumulative residual Mathai--Haubold entropy is not suitable
when the focus is on the post-survival behaviour of a component.

Motivated by this practical consideration, we introduce the dynamic cumulative residual Mathai--Haubold
entropy of order $\alpha$, which quantifies the uncertainty associated with the residual lifetime distribution
and provides a natural extension of the cumulative residual Mathai--Haubold  framework for reliability analysis under truncation.

\begin{definition}
Consider the lifetime of a component or system as $X$, and let it survive up to time $t$. In such cases, we consider the random variable $X_t=X-t|X>t$, which is time-dependent or dynamic with the survival function given by

\[
\bar{F}_t(x)=
\begin{cases}
    \frac{\bar{F}(x)}{\bar{F}(t)}, & \text{if $x>t$},\\
    1, & \text{$otherwise$}.
\end{cases}
\]
\end{definition}

\begin{definition}
 For a non-negative continuous random variable $X$ with survival function $\bar{F}(x)$, DCRMHE of order $\alpha$, is denoted by $\zeta_\alpha(X_t)$ is defined as
\begin{align}\label{ACRM t}
    \zeta_\alpha(X_t)= \frac{1}{\alpha-1}\left( {\int_t^\infty} \left(\frac{\bar{F}(x)}{\bar{F}(t)}\right)^{2-\alpha} \,dx -{\int_t^\infty} \frac{\bar{F}(x)}{\bar{F}(t)} \,dx\right), \alpha \neq 1, 0<\alpha<2.
\end{align}
$\zeta_\alpha(X_t)$ can be expressed in terms of mean residual life function $m(t)$ as
\begin{align}
\zeta_\alpha(X_t)= \frac{1}{\alpha-1}\left( {\int_t^\infty} \left(\frac{\bar{F}(x)}{\bar{F}(t)}\right)^{2-\alpha} \,dx -m(t)\right), \alpha \neq 1, 0<\alpha<2.
\end{align}
\end{definition}

Hence, $\zeta_{\alpha}(X_t)$ provides a measure of uncertainty for the residual lifetime random variable, expressed via the survival function $\bar{F}$, for different choices of $\alpha$.
Differentiating (\ref{ACRM t}) with respect to $t$, we get the following expression in terms of hazard rate $h(t)$ and mean residual life function $m(t)$.
\begin{align}\label{ACRM't}
(\alpha-1) \zeta'_\alpha(X_t)=\left((2-\alpha) \zeta_{\alpha}(X_t)-m(t)\right)h(t).
\end{align}

From the definition of DCRMHE of order $\alpha$, we can observe the following properties.
\begin{property}
For $t=0$, then $\zeta_\alpha(X_t)=\zeta_\alpha(X)$.
\end{property}
\begin{property}
 As $\alpha \to 1$,  $\zeta_\alpha(X_t)$ becomes the dynamic cumulative residual entropy function given in (\ref{CREt}).
 \end{property}

 \begin{property}
 Consider the random variable $Y=aX+b$ where $a>0$, $b\geq0$. Then we have
\[
\zeta_\alpha(Y;t)
= a\,\zeta_\alpha\!\left(X;\frac{t-b}{a}\right),
\qquad t\ge b.
\]
 \end{property}
    \begin{proof}
Let $Y=aX+b$, with $a>0$ and $b\geq 0$.
Then the survival function of $Y$ is given by
\[
\bar{F}_Y(y)=\bar{F}\!\left(\frac{y-b}{a}\right), \qquad y\ge b.
\]

The dynamic cumulative residual Mathai--Haubold entropy of order $\alpha$ of $Y$ at time $t$ is
\[
\zeta_\alpha(Y;t)
= \frac{1}{\alpha-1}
\left[
\int_t^\infty
\left(\frac{\bar{F}_Y(y)}{\bar{F}_Y(t)}\right)^{2-\alpha} \,dy
-
\int_t^\infty
\left(\frac{\bar{F}_Y(y)}{\bar{F}_Y(t)}\right) \,dy
\right].
\]

Substituting $\bar{F}_Y(y)=\bar{F}\!\left(\frac{y-b}{a}\right)$ and using the change of variable
$x=\frac{y-b}{a}$, we obtain
\[
\zeta_\alpha(Y;t)
= \frac{a}{\alpha-1}
\left[
\int_{\frac{t-b}{a}}^\infty
\left(\frac{\bar{F}(x)}{\bar{F}\!\left(\frac{t-b}{a}\right)}\right)^{2-\alpha} \,dx
-
\int_{\frac{t-b}{a}}^\infty
\left(\frac{\bar{F}(x)}{\bar{F}\!\left(\frac{t-b}{a}\right)}\right) \,dx
\right].
\]

Hence,
\[
\zeta_\alpha(Y;t)
= a\,\zeta_\alpha\!\left(X;\frac{t-b}{a}\right),
\qquad t\ge b.
\]
\end{proof}
\begin{corollary}
 From the above property, it is clear that\\
$(i)$ If $b=0$, then $\zeta_\alpha(aX;t)=a~\zeta_\alpha\left(X;\frac{t}
{a}\right)$.\\
$(ii)$ If $a=1$, then $\zeta_\alpha(X+b;t)=\zeta_\alpha(X;t-b)$.
 \end{corollary}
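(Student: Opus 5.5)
Both parts of the corollary are specializations of the preceding property, which gives $\zeta_\alpha(Y;t)=a\,\zeta_\alpha\!\left(X;\frac{t-b}{a}\right)$ for $Y=aX+b$ with $a>0$, $b\ge 0$ and $t\ge b$. I will substitute the appropriate parameter values into that identity and confirm that the hypotheses still hold.

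For part $(i)$, take $b=0$, so $Y=aX$ with $a>0$. The condition $t\ge b$ becomes $t\ge 0$, which is the natural domain for the dynamic measure. The argument $\frac{t-b}{a}$ reduces to $\frac{t}{a}$, which gives $\zeta_\alpha(aX;t)=a\,\zeta_\alpha\!\left(X;\frac{t}{a}\right)$.

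For part $(ii)$, take $a=1$, so $Y=X+b$ with $b\ge 0$. The factor $a$ becomes $1$ and $\frac{t-b}{a}=t-b$, which gives $\zeta_\alpha(X+b;t)=\zeta_\alpha(X;t-b)$ for $t\ge b$. As a sanity check, I can verify this directly from (\ref{ACRM t}): since $\bar F_{X+b}(y)=\bar F(y-b)$, the substitution $x=y-b$ turns both integrals over $(t,\infty)$ into integrals over $(t-b,\infty)$ with ratio $\bar F(x)/\bar F(t-b)$, and the Jacobian is $1$.

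There is no real obstacle; the only point that needs care is the range of $t$. Part $(ii)$ should be read for $t\ge b$, since for $t<b$ the residual survival function of $X+b$ is identically $1$ on $(t,b)$. Part $(i)$ needs no such restriction beyond $t\ge 0$.
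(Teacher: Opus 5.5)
Your proposal is correct and matches the paper, which likewise obtains both parts by specializing the preceding property $\zeta_\alpha(Y;t)=a\,\zeta_\alpha\!\left(X;\frac{t-b}{a}\right)$ to $b=0$ and to $a=1$. Your direct check of (ii) via $x=y-b$ and your care about $t\ge b$ are fine extras; the restriction is needed only so that the argument $t-b$ on the right stays in the domain of the dynamic measure, since the integrand difference vanishes on $(t,b)$ anyway.
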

 \begin{property}
For the proportional hazards model with survival function
\[
\bar{F}_\theta^*(x) = \bigl(\bar{F}(x)\bigr)^\theta, \qquad \theta>0,\; x\in\mathbb{R},
\]
the dynamic cumulative residual Mathai--Haubold entropy of order $\alpha$ satisfies
\begin{equation}\label{aPH}
(\alpha-1)\,\zeta_{\alpha}(X_\theta^{*};t)
= (\beta-1)\,\zeta_{\beta}(X;t)
+ (\theta-1)\,\psi_{\theta}(X;t),
\end{equation}
where $\beta = 2-\theta(2-\alpha)$ and $\psi_{\theta}(X;t)$ denotes the dynamic cumulative
residual Tsallis entropy of order $\theta$ defined in (\ref{a eta t}).
\end{property}

\begin{corollary}
Consider a series system consisting of $n$ independent and identically distributed components with common lifetime distribution $F$. Let $X_{1:n}$ denote the system lifetime. Then the survival function of $X_{1:n}$ is
\[
\bar{F}_{X_{1:n}}(x)=\bigl(\bar{F}(x)\bigr)^n,
\]
and hence $X_{1:n}$ follows a proportional hazards model with parameter $\theta=n$.
Consequently, the cumulative residual Mathai--Haubold entropy of order $\alpha$ satisfies
\[
(\alpha-1)\,\zeta_{\alpha}(X_{1:n};t)
= (\beta-1)\,\zeta_{\beta}(X;t)
+ (n-1)\,\psi_{n}(X;t),
\]
where $\beta = 2-n(2-\alpha)$.
\end{corollary}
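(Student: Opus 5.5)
The corollary follows by specializing the proportional hazards identity \eqref{aPH} to $\theta=n$. First I will check that the series system fits the proportional hazards model. Independence of $X_1,\dots,X_n$ gives
\[
\bar F_{X_{1:n}}(x)=P(X_1>x,\dots,X_n>x)=\prod_{i=1}^n P(X_i>x)=\bigl(\bar F(x)\bigr)^n .
\]
So $X_{1:n}$ has the same law as $X^*_\theta$ with $\theta=n$, and the identity then gives the stated relation with $\beta=2-n(2-\alpha)$.

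Because the preceding property is only stated, I will also verify \eqref{aPH} directly. For $x>t$ the residual survival ratio of $X^*_\theta$ is $\bar F^*_\theta(x)/\bar F^*_\theta(t)=\bigl(\bar F(x)/\bar F(t)\bigr)^\theta$. Substituting this into \eqref{ACRM t} gives
\[
(\alpha-1)\zeta_\alpha(X^*_\theta;t)=\int_t^\infty\Bigl(\tfrac{\bar F(x)}{\bar F(t)}\Bigr)^{\theta(2-\alpha)}dx-\int_t^\infty\Bigl(\tfrac{\bar F(x)}{\bar F(t)}\Bigr)^{\theta}dx .
\]
Next I add and subtract $m(t)=\int_t^\infty \bar F(x)/\bar F(t)\,dx$. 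Since $\theta(2-\alpha)=2-\beta$, the first integral minus $m(t)$ equals $(\beta-1)\zeta_\beta(X;t)$ by \eqref{ACRM t}. Likewise, $m(t)$ minus the second integral equals $(\theta-1)\psi_\theta(X;t)$ by \eqref{a eta t}. Adding the two pieces gives \eqref{aPH}, and setting $\theta=n$ completes the corollary.

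The only real obstacle is a domain issue. With $\theta=n\ge2$, $\beta=2-n(2-\alpha)$ can fall outside $(0,2)$ and may equal $1$. If $\beta=1$, the term $(\beta-1)\zeta_\beta(X;t)$ must be read as the combined integral expression, which is then zero. In general I will read $(\beta-1)\zeta_\beta(X;t)$ as the integral expression it denotes, which is valid for any $\beta<2$. I will also note that all integrals are finite whenever the mean residual life $m(t)$ is finite, because $0\le \bar F(x)/\bar F(t)\le 1$ and every exponent used is positive.
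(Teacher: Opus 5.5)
Your proof is correct and follows the paper's route. The corollary is the $\theta=n$ case of the proportional hazards identity \eqref{aPH}, and the paper's argument for that identity (written out in the static case, Property 2.5) is the same add-and-subtract of the mean term that you use, with $m(t)$ in place of $\mu$.

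One correction to your closing finiteness remark. Finiteness of $m(t)$ bounds $\int_t^\infty\bigl(\bar F(x)/\bar F(t)\bigr)^{p}\,dx$ only when $p\ge 1$. For $n(2-\alpha)<1$, i.e.\ $\alpha>2-1/n$, the common integral can diverge, for example with a Pareto tail $\bar F(x)=(1+x)^{-c}$ where $1<c\le 1/(n(2-\alpha))$. The identity still holds in that case because the same integral appears on both sides, so you should either assume it is finite or read the identity in the extended sense.
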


\begin{remark}
The above result shows that the dynamic cumulative residual Mathai--Haubold entropy of order $\alpha$ of a
series system can be expressed as a linear combination of the corresponding entropy
of a single component and the dynamic cumulative residual Tsallis entropy.
In contrast, for a parallel system with lifetime $X_{n:n}$ and survival function
$\bar{F}_{X_{n:n}}(x)=1-\{1-\bar{F}(x)\}^n$, the proportional hazards structure does not
hold in general, and therefore an identity of the form \eqref{aPH} is not directly
applicable.
\end{remark}

\begin{flushleft}
The next theorem provides the relationship of the dynamic cumulative residual Mathai--Haubold entropy of order $\alpha$, $\zeta_{\alpha}(X_t)$ with mean residual life function $m(x)$.
\end{flushleft}

\begin{theorem}
 Consider a non-negative random variable $X$ with density function $f(x)$, survival function $\bar{F}(x)$ and mean residual life function $m(x)$. Then
\[
\zeta_{\alpha}(X_t)
= \frac{{E}\!\left[ m(X)\, (\bar{F}(X))^{1-\alpha}\mid X>t \right]}
{(\bar{F}(t))^{1-\alpha}} .
\]
\end{theorem}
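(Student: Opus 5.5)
The plan is to mirror the computation in Property 2.1, which is the case $t=0$ of this identity, now carried out on $(t,\infty)$ with the conditioning on $X>t$. First I will write the conditional expectation as an integral:
\[
{E}\!\left[ m(X)(\bar F(X))^{1-\alpha}\mid X>t\right]=\frac{1}{\bar F(t)}\int_t^\infty m(x)(\bar F(x))^{1-\alpha} f(x)\,dx .
\]
Next I will introduce $G(x)=m(x)\bar F(x)=\int_x^\infty \bar F(u)\,du$, which satisfies $G'(x)=-\bar F(x)$. The integrand then becomes $G(x)(\bar F(x))^{-\alpha}f(x)$.

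The key step is an integration by parts. It uses
\[
\frac{d}{dx}(\bar F(x))^{1-\alpha}=-(1-\alpha)(\bar F(x))^{-\alpha}f(x),
\]
so that
\[
\int_t^\infty G(x)(\bar F(x))^{-\alpha}f(x)\,dx=\frac{1}{1-\alpha}\Bigl(\bigl[-G(x)(\bar F(x))^{1-\alpha}\bigr]_t^\infty-\int_t^\infty (\bar F(x))^{2-\alpha}\,dx\Bigr).
\]
Provided the boundary term at infinity vanishes, the right-hand side equals
\[
\frac{1}{1-\alpha}\Bigl(m(t)(\bar F(t))^{2-\alpha}-\int_t^\infty(\bar F(x))^{2-\alpha}dx\Bigr).
\]
Dividing by $\bar F(t)\cdot(\bar F(t))^{1-\alpha}=(\bar F(t))^{2-\alpha}$ gives
\[
\frac{1}{1-\alpha}\Bigl(m(t)-\int_t^\infty\bigl(\bar F(x)/\bar F(t)\bigr)^{2-\alpha}dx\Bigr),
\]
which is exactly the expression for $\zeta_\alpha(X_t)$ in terms of $m(t)$ in Definition 3.2.

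The main obstacle is justifying that $G(x)(\bar F(x))^{1-\alpha}\to 0$ as $x\to\infty$. I will assume, as is implicit in the definition, that $\mu<\infty$ and $\int_0^\infty(\bar F)^{2-\alpha}<\infty$, and split into two cases.

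\emph{Case $0<\alpha<1$.} Here $(\bar F)^{1-\alpha}\le 1$ and $G(x)\to0$ because $\mu<\infty$, so the boundary term vanishes.

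\emph{Case $1<\alpha<2$.} The exponent $1-\alpha$ is negative and $\bar F$ is nonincreasing, so $(\bar F(x))^{1-\alpha}\le(\bar F(u))^{1-\alpha}$ for $u>x$. Hence
\[
G(x)(\bar F(x))^{1-\alpha}\le\int_x^\infty(\bar F(u))^{2-\alpha}du,
\]
which is the tail of a convergent integral and therefore tends to $0$.

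With this limit in hand the identity follows, and setting $t=0$ recovers Property 2.1 as a consistency check.
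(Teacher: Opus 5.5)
Your proof is correct and is essentially the paper's argument: the paper writes $\bar F(x)=-\frac{d}{dx}\bigl(m(x)\bar F(x)\bigr)$ inside $\int_t^\infty(\bar F(x))^{2-\alpha}\,dx$ and integrates by parts against $(\bar F(x))^{1-\alpha}$. That is the same integration by parts you carry out, only run from the definition toward the conditional expectation rather than the reverse, and your case-by-case check that $m(x)\bar F(x)(\bar F(x))^{1-\alpha}\to 0$ supplies the boundary-term justification the paper omits (for bounded support, take this limit at the right endpoint, where your tail bound applies unchanged).
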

\begin{proof}
From(\ref{ACRM t}), we have
\begin{align}
(\alpha-1)\,\zeta_{\alpha}(X_t)
&= - \bigl(\bar{F}(t)\bigr)^{\alpha-2}
\int_{t}^{\infty}
\frac{d}{dx}\!\bigl(m(x)\,\bar{F}(x)\bigr)\,
\bigl(\bar{F}(x)\bigr)^{1-\alpha}\,dx
- m(t) \nonumber\\[6pt]
&= (\alpha-1)\,\frac{1}{\bigl(\bar{F}(t)\bigr)^{2-\alpha}}
\int_{t}^{\infty}
m(x)\,\bigl(\bar{F}(x)\bigr)^{1-\alpha}\,f(x)\,dx , \nonumber\\[6pt]
\zeta_{\alpha}(X_t)
&= \frac{1}{\bigl(\bar{F}(t)\bigr)^{1-\alpha}}
\int_{t}^{\infty}
m(x)\,\bigl(\bar{F}(x)\bigr)^{1-\alpha}\,\frac{f(x)}{\bar{F}(t)}\,dx .\label{relation}
\end{align}
Hence the proof.
\end{proof}
\begin{corollary}
    Consider a non negative random variable $X$ with mean residual life function $m(x)$. If $X$ has decreasing (increasing) mean residual life (DMRL) (IMRL), then
   \[
\zeta_{\alpha}(X_t) \leq (\geq) \frac{m(t)}{2-\alpha}.
\]
\end{corollary}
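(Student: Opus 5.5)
The plan is to start from the representation in the preceding theorem, namely \eqref{relation}:
\[
\zeta_{\alpha}(X_t)=\frac{1}{(\bar F(t))^{2-\alpha}}\int_t^\infty m(x)\,(\bar F(x))^{1-\alpha} f(x)\,dx .
\]
The DMRL (IMRL) assumption means that $m(x)\le m(t)$ ($m(x)\ge m(t)$) for every $x\ge t$. The weight $(\bar F(x))^{1-\alpha}f(x)/(\bar F(t))^{2-\alpha}$ is nonnegative on $(t,\infty)$, so we may replace $m(x)$ by $m(t)$ inside the integral and obtain
\[
\zeta_{\alpha}(X_t)\le(\ge)\,\frac{m(t)}{(\bar F(t))^{2-\alpha}}\int_t^\infty (\bar F(x))^{1-\alpha} f(x)\,dx .
\]
This step needs no further justification.

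It remains to evaluate the integral. Substituting $u=\bar F(x)$, so that $du=-f(x)\,dx$, gives
\[
\int_t^\infty (\bar F(x))^{1-\alpha}f(x)\,dx=\int_0^{\bar F(t)}u^{1-\alpha}\,du=\frac{(\bar F(t))^{2-\alpha}}{2-\alpha}.
\]
This is valid because $0<\alpha<2$ makes $2-\alpha>0$, so the integral converges at $u=0$, and because $\bar F(x)\to 0$ as $x\to\infty$. The factor $(\bar F(t))^{2-\alpha}$ then cancels against the prefactor, which yields $\zeta_{\alpha}(X_t)\le(\ge)\, m(t)/(2-\alpha)$.

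No step is a real obstacle. The points that need a little care are that the inequality direction is preserved, which holds because the weight is positive, and that the integrals are finite. Finiteness follows whenever $m(t)<\infty$, which is implicit in the DMRL/IMRL hypothesis. As a consistency check, the exponential case has constant $m(t)=1/\lambda$ and gives equality, $\zeta_\alpha(X_t)=1/(\lambda(2-\alpha))$, which matches Example 2.1.
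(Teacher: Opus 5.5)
Your argument is correct and is the same as the paper's. Both substitute $m(x)\le(\ge)\,m(t)$ into the representation of $\zeta_{\alpha}(X_t)$ from the preceding theorem and then use $\int_t^\infty(\bar{F}(x))^{1-\alpha}f(x)\,dx=(\bar{F}(t))^{2-\alpha}/(2-\alpha)$, a computation the paper leaves implicit and you carry out. One small correction to your side remark: finiteness is automatic only under DMRL, since under IMRL with $1<\alpha<2$ one can have $\zeta_{\alpha}(X_t)=+\infty$ (e.g.\ a generalized Pareto law with a heavy enough tail), although the ``$\ge$'' inequality then holds trivially.
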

\begin{proof}
When $X$ has the DMRL (IMRL) property, we have
$m(x) \leq (\geq)\, m(t)$ for all $x \geq t$.
Using (\ref{relation}), the result follows.
\end{proof}
\begin{definition}
A random variable $X$ is said to have increasing (decreasing) dynamic cumulative residual Mathai--Haubold entropy
of order $\alpha$, denoted by IDCRMHE (DDCRMHE), if and only if
$\zeta_{\alpha}(X_t)$ is an increasing (decreasing) function of $t$.

\end{definition}
\begin{theorem}
The function $\zeta_{\alpha}(X_t)$ is increasing (decreasing) in $t$ if and only if
\[
\zeta_{\alpha}(X_t)\geq(\leq)\frac{m(t)}{2-\alpha}.
\]
\end{theorem}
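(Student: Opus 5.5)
The plan is to differentiate $\zeta_\alpha(X_t)$ in $t$ and read off the sign of the derivative. Write
\[
A(t)=\frac{1}{(\bar F(t))^{2-\alpha}}\int_t^\infty(\bar F(x))^{2-\alpha}\,dx ,
\qquad\text{so that}\qquad
(\alpha-1)\,\zeta_\alpha(X_t)=A(t)-m(t),
\]
by (\ref{ACRM t}) and the mean residual life form that follows it. Using $\bar F'(t)=-f(t)=-h(t)\bar F(t)$, the quotient rule gives
\[
A'(t)=-1+(2-\alpha)h(t)A(t),
\]
and the standard identity for the mean residual life is
\[
m'(t)=-1+h(t)m(t).
\]
Subtracting, the constants $-1$ cancel:
\[
(\alpha-1)\,\zeta'_\alpha(X_t)=h(t)\bigl((2-\alpha)A(t)-m(t)\bigr).
\]

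The key step is to eliminate $A(t)$ using $A(t)=(\alpha-1)\zeta_\alpha(X_t)+m(t)$. This gives
\[
(2-\alpha)A(t)-m(t)=(2-\alpha)(\alpha-1)\zeta_\alpha(X_t)+(1-\alpha)m(t)=(\alpha-1)\bigl((2-\alpha)\zeta_\alpha(X_t)-m(t)\bigr).
\]
Dividing by $\alpha-1\neq 0$ then yields the clean identity
\[
\zeta'_\alpha(X_t)=h(t)\bigl((2-\alpha)\,\zeta_\alpha(X_t)-m(t)\bigr).
\]
The factor $\alpha-1$ cancels. So when (\ref{ACRM't}) is used, it should be read in this simplified form; otherwise one would wrongly conclude that the direction of monotonicity flips between $0<\alpha<1$ and $1<\alpha<2$. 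I expect this bookkeeping of the sign of $\alpha-1$ to be the main point needing care. I would carry out the derivative computation explicitly rather than rely on the displayed form of (\ref{ACRM't}).

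Finally, on the support of $X$ we have $h(t)=f(t)/\bar F(t)\ge 0$, with $h(t)>0$ wherever $f(t)>0$. Hence $\zeta'_\alpha(X_t)\ge 0$ for all $t$ if and only if $(2-\alpha)\zeta_\alpha(X_t)\ge m(t)$. Since $2-\alpha>0$, this is equivalent to $\zeta_\alpha(X_t)\ge m(t)/(2-\alpha)$. The decreasing case follows in the same way with all inequalities reversed. At points where $h(t)=0$ the derivative vanishes and does not affect monotonicity. For the converse direction at such points, I would either assume $f>0$ on the support, as is implicit in the paper's setting, or note that the equivalence is meant for $t$ in the interior of the support.
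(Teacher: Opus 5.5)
Your proposal is correct and follows the paper's proof: the paper also differentiates the definition of $\zeta_{\alpha}(X_t)$ to get $\zeta_{\alpha}'(X_t)=\bigl((2-\alpha)\zeta_{\alpha}(X_t)-m(t)\bigr)h(t)$, without the factor $\alpha-1$ that appears in the paper's earlier derivative display, and reads off the sign from this identity. Your explicit computation shows that this factor does cancel, so you are right to use the simplified identity; you also make explicit the requirement $h(t)>0$ for the converse direction, which the paper leaves implicit.
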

\begin{proof}
   Differentiating \eqref{ACRM t} with respect to $t$, we obtain the following relationship
among the dynamic cumulative residual Mathai--Haubold entropy of order $\alpha$, the hazard rate
$h(t)$, and the mean residual life function $m(t)$ as
\begin{equation}\label{relationship}
\zeta_{\alpha}'(X_t)
=\bigl((2-\alpha)\zeta_{\alpha}(X_t)-m(t)\bigr)\,h(t).
\end{equation}
The desired result now follows directly from \eqref{relationship}.
\end{proof}
\begin{lemma}
If $X$ is smaller than $Y$ in the hazard rate order, denoted by $X \leq_{\mathrm{hr}} Y$,
then
\[
m_{F}(t) \leq m_{G}(t), \qquad t \ge 0,
\]
where $m_{F}(t)$ and $m_{G}(t)$ denote the mean residual life functions of $X$ and $Y$,
respectively.
\end{lemma}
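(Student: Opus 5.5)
The plan is to prove the lemma directly from the definition of the hazard rate order, by writing each mean residual life function as an integral of a ratio of survival functions.

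First, recall that $X \leq_{\mathrm{hr}} Y$ means $h_F(t)\ge h_G(t)$ for all $t$. Equivalently, $\bar{G}(t)/\bar{F}(t)$ is nondecreasing in $t$ wherever it is defined. The equivalence follows because the logarithmic derivative of this ratio is $h_F(t)-h_G(t)\ge 0$. If densities are not assumed, we will take the monotone-ratio formulation as the definition.

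From the monotone ratio, for every $x\ge t$ we get
\[
\frac{\bar{F}(x)}{\bar{F}(t)} \le \frac{\bar{G}(x)}{\bar{G}(t)} .
\]
In other words, the survival function of the residual life $X_t$ is dominated pointwise by that of $Y_t$; this is the usual stochastic order $X_t\le_{\mathrm{st}} Y_t$.

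Integrating this inequality over $x\in(t,\infty)$ and using the representation
\[
m(t)=\int_t^\infty \frac{\bar{F}(u)}{\bar{F}(t)}\,du,
\]
stated earlier in the paper, gives $m_F(t)\le m_G(t)$.

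There is no real obstacle here. The only care needed is at points where $\bar{F}(t)$ or $\bar{G}(t)$ vanishes, and in justifying that the hazard-rate inequality yields the monotone ratio. This is handled by writing
\[
\frac{\bar{G}(x)\,\bar{F}(t)}{\bar{G}(t)\,\bar{F}(x)} = \exp\Bigl(\int_t^x \bigl(h_F(u)-h_G(u)\bigr)\,du\Bigr) \ge 1
\]
for $x\ge t$ within the common support. We restrict to $t$ with $\bar{F}(t)>0$, where $m_F(t)$ is defined, and to $\bar G(t)>0$ for $m_G(t)$. If $\bar{F}(x)=0$ for some $x>t$ while $\bar{G}(x)>0$, the inequality holds trivially at that $x$.
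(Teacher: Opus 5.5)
Your proposal is correct and follows the same route as the paper: the hazard rate order gives $\bar{F}(x)/\bar{F}(t) \le \bar{G}(x)/\bar{G}(t)$ for all $x \ge t$, and integrating this over $(t,\infty)$ yields $m_F(t) \le m_G(t)$. Your derivation of that ratio inequality from $\exp\bigl(\int_t^x (h_F(u)-h_G(u))\,du\bigr) \ge 1$, together with your handling of points where $\bar{F}$ or $\bar{G}$ vanishes, supplies details the paper leaves implicit.
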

\begin{proof}
The proof follows from the fact that if $X \leq_{\mathrm{hr}} Y$, then for all $x \ge t$,
\[
\frac{\bar{F}(x)}{\bar{F}(t)} \leq \frac{\bar{G}(x)}{\bar{G}(t)}.
\]
This inequality yields the desired ordering of the corresponding mean residual life
functions, and hence the proof is complete.
\end{proof}
The following theorem establishes that the dynamic cumulative residual Mathai--Haubold
entropy of order $\alpha$, $\zeta_{\alpha}(X_t)$, uniquely determines the survival function.

\begin{theorem}
 Consider a non-negative random variable $X$ with probability density function $f(x)$,
survival function $\bar{F}(x)$, hazard rate $h(x)$, and mean residual life function
$m(x)$. Suppose that $\zeta_{\alpha}(X_t)$ is increasing in $t$. Then
$\zeta_{\alpha}(X_t)$ uniquely determines the survival function $\bar{F}(x)$.
\end{theorem}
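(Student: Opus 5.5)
Our approach is to turn the derivative relation \eqref{relationship} into a differential equation that the hazard rate must satisfy, and then show this equation has only one admissible solution. Throughout, write $\zeta(t)=\zeta_\alpha(X_t)$ for brevity.

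\textbf{Step 1: pin down the derivative relation.} Direct differentiation of \eqref{ACRM t} gives
\[
(\alpha-1)\zeta'(t)=\bigl((2-\alpha)\zeta(t)-m(t)\bigr)h(t),
\]
which is \eqref{ACRM't}. The quantities entering here are $\zeta(t)$, $h(t)$ and $m(t)$.

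\textbf{Step 2: eliminate $m$ in favour of $h$.} Using $m'(t)=h(t)m(t)-1$, the relation from Step 1 becomes
\[
m(t)=(2-\alpha)\zeta(t)-\frac{(\alpha-1)\zeta'(t)}{h(t)}.
\]
Substituting this into $m'=hm-1$ gives an equation involving $h$ and $h'$ only:
\[
(2-\alpha)\zeta'-(\alpha-1)\frac{\zeta'' h-\zeta' h'}{h^2}=(2-\alpha)\zeta h-(\alpha-1)\zeta'-1.
\]
This is a first-order nonlinear ODE in $h$. Multiplying by $h^2$, it reads
\[
(\alpha-1)\zeta' h'=(2-\alpha)\zeta h^3-\bigl(1+(3-\alpha)\zeta'\bigr)h^2+(\alpha-1)\zeta'' h .
\]

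\textbf{Step 3: uniqueness.} Suppose $X$ and $Y$ have the same $\zeta$. Then their hazard rates $h_1$ and $h_2$ solve the same ODE with the same coefficients. Since $\zeta'>0$ by the monotonicity hypothesis, the coefficient of $h'$ does not vanish. The right-hand side $\phi(t,h)$ is a polynomial in $h$, hence locally Lipschitz, so Picard–Lindelöf gives uniqueness once the initial value $h(0)$ is fixed.

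\textbf{Step 4 (main obstacle): fix the initial condition.} The ODE alone does not determine $h(0)$, and the paper's assumptions do not obviously supply it. We plan two routes:
\begin{itemize}
\item \emph{Preferred route.} Avoid $h(0)$ altogether by using $m(0)=\mu$ together with Step 2 at $t=0$. This requires $\mu$ to be determined by $\zeta$, which could follow by letting $t\to\infty$ under suitable tail behaviour.
\item \emph{Fallback route.} Argue as in Ebrahimi-type proofs. Define $\psi(t)=h_1(t)/h_2(t)$ and show that, on the set where $\psi\neq1$, the equation forces a contradiction with positivity of hazards or with $\int_0^\infty h=\infty$.
\end{itemize}

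If both routes fail, we state the result in the form typical of this literature: $\zeta$ together with $h(0)$, or equivalently $\mu$, determines $h$. Then $\bar F(x)=\exp\bigl(-\int_0^x h(u)\,du\bigr)$ completes the proof.
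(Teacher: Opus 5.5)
\textbf{The gap is Step~4, and that step carries the whole content of the theorem.} Eliminating $m$ (or $h$) leaves a first-order ODE. Its solutions form a one-parameter family indexed by $h(0)$, or equivalently by $\mu=m(0)$, and Picard--Lindel\"of only says that each parameter value gives one solution. Nothing at $t=0$ selects the right value. It is fixed instead by the global identity $m(t)=\int_t^\infty \bar F(x)/\bar F(t)\,dx$, which is a condition at the right end of the support; a local existence--uniqueness theorem cannot see it. Neither of your routes is carried out: the first leaves the tail condition unspecified, and the second does not say what inequality $\psi=h_1/h_2$ would satisfy. The fallback ``$\zeta$ together with $h(0)$ or $\mu$ determines $\bar F$'' is strictly weaker than the statement. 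So, as written, the proposal does not prove the theorem.

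\textbf{The missing idea is a monotone comparison.} This is what the paper's proof is reaching for with Lemma~3.1. First fix Step~1: differentiating \eqref{ACRM t} gives $\zeta'(t)=h(t)\bigl((2-\alpha)\zeta(t)-m(t)\bigr)$, which is \eqref{relationship}. The factor $\alpha-1$ in \eqref{ACRM't} is spurious; test it on $U(0,b)$, where $\zeta(t)=(b-t)/(2(3-\alpha))$, $m(t)=(b-t)/2$ and $h(t)=1/(b-t)$. The correction is not cosmetic. With $\zeta'>0$, the correct relation forces $(2-\alpha)\zeta-m>0$ and $h=\zeta'/\bigl((2-\alpha)\zeta-m\bigr)$, which is increasing in $m$. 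With the extra factor and $0<\alpha<1$, both the sign and the monotonicity flip, and the argument below fails.

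Substituting into $m'=hm-1$ gives $m'=\zeta'm/\bigl((2-\alpha)\zeta-m\bigr)-1$. For two laws with the same $\zeta$, the difference $D=m_2-m_1$ therefore satisfies
\[
D'=\frac{(2-\alpha)\zeta\,\zeta'}{\bigl((2-\alpha)\zeta-m_1\bigr)\bigl((2-\alpha)\zeta-m_2\bigr)}\,D ,
\]
so $D$ never changes sign. Suppose $D(t_0)>0$. Then $m_1<m_2$, and hence $h_1<h_2$, on all of $[t_0,\infty)$. It follows that $\bar F_1(x)/\bar F_1(t_0)>\bar F_2(x)/\bar F_2(t_0)$ for $x>t_0$, so $m_1(t_0)>m_2(t_0)$, a contradiction. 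Hence $m_1\equiv m_2$, then $h_1\equiv h_2$ and $\bar F_1=\bar F_2$. In particular $\mu$ is determined by $\zeta$, with no tail analysis and no need for $\zeta''$.

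The paper itself compares $h_1,h_2$ only at the single point $t_0$ before invoking Lemma~3.1, which needs the hazard ordering on the whole half-line. The sign preservation of $D$ is exactly the step that closes that argument as well.

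Separately, your Step~2 algebra does not follow from your own Step~1. From your relation, the $h^2$ coefficient would be $-(1+\zeta')$. From the correct relation, the ODE is $\zeta'h'=(2-\alpha)\zeta h^3-\bigl(1+(3-\alpha)\zeta'\bigr)h^2+\zeta''h$.
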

 \begin{proof}
Suppose that $F(x)$ and $G(x)$ are the distribution functions of the random variables
$X$ and $Y$, respectively, and assume that
\begin{equation}\label{A CRM F=G}
\zeta_{\alpha}(X_t)=\zeta_{\alpha}(Y_t), \qquad t \ge 0 .
\end{equation}
Differentiating both sides of \eqref{A CRM F=G} with respect to $t$ and using
\eqref{relationship}, we obtain
\begin{equation}\label{hazard}
\bigl((2-\alpha)\zeta_{\alpha}(X_t)-m_{1}(t)\bigr)\,h_{1}(t)
=
\bigl((2-\alpha)\zeta_{\alpha}(Y_t)-m_{2}(t)\bigr)\,h_{2}(t),
\end{equation}
where $h_{1}(t)$ and $h_{2}(t)$ denote the hazard rates, and $m_{1}(t)$ and $m_{2}(t)$
denote the mean residual life functions corresponding to $X$ and $Y$, respectively.

If $h_{1}(t)=h_{2}(t)$ for all $t \ge 0$, then $\bar{F}(t)=\bar{G}(t)$ for all
$t \ge 0$, and hence $F=G$. Therefore, it suffices to show that
$h_{1}(t)=h_{2}(t)$ for all $t \ge 0$.

Suppose, to the contrary, that there exists $t_0 \ge 0$ such that
$h_{1}(t_0)\neq h_{2}(t_0)$. Without loss of generality, assume that
$h_{1}(t_0)>h_{2}(t_0)$. Then, from \eqref{hazard}, we obtain
\[
(2-\alpha)\zeta_{\alpha}(X_{t_0})-m_{1}(t_0)
<
(2-\alpha)\zeta_{\alpha}(Y_{t_0})-m_{2}(t_0),
\]
which implies that
\[
m_{1}(t_0)>m_{2}(t_0).
\]
This contradicts Lemma~3.1, which states that $h_{1}(t_0)>h_{2}(t_0)$ implies
$m_{1}(t_0)<m_{2}(t_0)$. Hence, such a $t_0$ cannot exist, and therefore
$h_{1}(t)=h_{2}(t)$ for all $t \ge 0$. Consequently, $F=G$, which completes the proof.
\end{proof}


\section{Characterization results}
In this section, we  characterize some well known distributions based on dynamic cumulative residual Mathai--Haubold entropy of order $\alpha$, $\zeta_{\alpha}(X_t)$.\\
The following theorem shows that $\zeta_{\alpha}(X_t)$ is independent of $t$ if and only if $X$ follows exponential distribution.
\begin{theorem}
  Let $X$ be an absolutely continuous non-negative random variable with distribution function $F(x)$. Then $\zeta_{\alpha}(X_t)$ is independent of $t$ if and only if $X$ follows exponential distribution.
\end{theorem}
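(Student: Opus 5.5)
For the "if" part I will compute directly. If $\bar F(x)=e^{-\lambda x}$, then $\bar F(x)/\bar F(t)=e^{-\lambda(x-t)}$ for $x>t$. Substituting into \eqref{ACRM t} gives
\[
\zeta_\alpha(X_t)=\frac{1}{\alpha-1}\left(\frac{1}{\lambda(2-\alpha)}-\frac{1}{\lambda}\right)=\frac{1}{\lambda(2-\alpha)}.
\]
This is free of $t$ and agrees with Example 2.1. Equivalently, the memoryless property gives $X_t\overset{d}{=}X$, so $\zeta_\alpha(X_t)=\zeta_\alpha(X)$.

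For the "only if" part I will use the differential identity \eqref{relationship}, equivalently \eqref{ACRM't}, which relates $\zeta'_\alpha(X_t)$, the hazard rate $h(t)$ and the mean residual life $m(t)$. Suppose $\zeta_\alpha(X_t)=c$ for all $t$. Then $\zeta_\alpha'(X_t)=0$, so
\[
\bigl((2-\alpha)c-m(t)\bigr)h(t)=0 .
\]
On the support we have $\bar F(t)>0$. Wherever $h(t)>0$ this forces $m(t)=(2-\alpha)c$, a constant.

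To handle points where $h$ vanishes, I will argue directly from the definition. Write
\[
(\alpha-1)c\,\bar F(t)^{2-\alpha}=\int_t^\infty \bar F^{2-\alpha}\,dx-\bar F(t)^{1-\alpha}\int_t^\infty\bar F\,dx
\]
and differentiate in $t$. After simplification this gives
\[
(2-\alpha)c\,f(t)=f(t)\,m(t),
\]
so $m(t)=(2-\alpha)c$ wherever $f(t)>0$. I will then extend this to the whole support by continuity of $m$. I expect this to be a minor technical point, and I would simply assume $h>0$ on the support, as the paper implicitly does.

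Once $m(t)\equiv k$ with $k=(2-\alpha)c>0$, I will use the standard inversion formula
\[
\bar F(t)=\frac{m(0)}{m(t)}\exp\!\left(-\int_0^t\frac{du}{m(u)}\right).
\]
Equivalently, differentiating $m(t)\bar F(t)=\int_t^\infty\bar F$ gives $m'\bar F - m f=-\bar F$, hence $h=(1+m')/m=1/k$. Either route gives $\bar F(t)=e^{-t/k}$, which is exponential with $\lambda=1/k=1/((2-\alpha)c)$. This is consistent with the constant computed in the "if" part.

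I expect the main obstacle to be purely the justification of differentiability and of $h>0$. The algebra itself is routine.
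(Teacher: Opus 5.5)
Your route is the paper's route: set $\zeta_\alpha'(X_t)=0$ in \eqref{relationship}, get a constant mean residual life, and conclude exponential. Your ``if'' direction is fine.

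\textbf{The gap.} It sits exactly at the step you call minor, and the paper's proof has the same hole, since it silently divides by $h(t)$.
\begin{itemize}
\item Your ``direct'' differentiation does not help. The identity $(2-\alpha)c\,f(t)=f(t)\,m(t)$ is just \eqref{relationship} multiplied by $\bar F(t)$, so it says nothing where $f=0$.
\item ``Extend by continuity of $m$'' fails. On any stretch where $\bar F$ is flat, $m'(t)=h(t)m(t)-1=-1$, so $m$ is not constant there.
\end{itemize}
This is not a technicality: the statement as written is false. Take $a>0$ and $\bar F(x)=1$ for $0\le x\le a$, $\bar F(x)=e^{-\lambda(x-a)}$ for $x>a$, i.e.\ $X=a+Y$ with $Y\sim\exp(\lambda)$. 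For $t\ge a$, memorylessness gives $\zeta_\alpha(X_t)=1/(\lambda(2-\alpha))$. For $0\le t<a$, since $\bar F(t)=1$,
\[
\zeta_\alpha(X_t)=\frac{1}{\alpha-1}\left[\Bigl(a-t+\frac{1}{\lambda(2-\alpha)}\Bigr)-\Bigl(a-t+\frac{1}{\lambda}\Bigr)\right]=\frac{1}{\lambda(2-\alpha)} .
\]
So $\zeta_\alpha(X_t)$ is constant on $[0,\infty)$, yet $X$ is not exponential, and $m(t)=a-t+1/\lambda$ on $[0,a)$. The reason is that the integrand $u^{2-\alpha}-u$ in \eqref{ACRM t} vanishes at $u=1$, so flat pieces of $\bar F$ are invisible to the measure. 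This is the same phenomenon behind the shift invariance $\zeta_\alpha(X+b)=\zeta_\alpha(X)$ in Section~2.

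\textbf{What is true.} $\zeta_\alpha(X_t)$ is constant if and only if $X\overset{d}{=}a+Y$ with $a\ge 0$ and $Y$ exponential. So the theorem needs an added hypothesis.
\begin{itemize}
\item Assuming $h(t)>0$ for every $t>0$ with $\bar F(t)>0$ makes your argument, and the paper's, go through verbatim. Note that ``$h>0$ on the support of $X$'' is not enough: the example above has $h\equiv\lambda$ on its support $[a,\infty)$.
\item The minimal fix is $\bar F(t)<1$ for all $t>0$. Even then, continuity alone is not the right tool. Let $(a,b)$ be a maximal interval on which $\bar F$ is constant, with $a>0$ and $\bar F(b)>0$. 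Both endpoints are limits of points with $f>0$, at which $m=(2-\alpha)c$, so continuity gives $m(a)=m(b)$. This contradicts $m(b)=m(a)-(b-a)$.
\item A flat piece reaching the right end of the support is impossible, because $\bar F$ is continuous and tends to $0$.
\end{itemize}
So only an initial flat piece $[0,a)$ can survive. That is exactly the shifted exponential, and the hypothesis excludes it. Then $m\equiv(2-\alpha)c$ on $[0,\infty)$ and your inversion step finishes the proof.

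A minor point: \eqref{ACRM't} and \eqref{relationship} are not equivalent. They differ by the factor $\alpha-1$, and the correct one is \eqref{relationship}. Both give the same condition when $\zeta_\alpha'(X_t)=0$, so this does not affect your argument.
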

\begin{proof}
  Assume that $\zeta_{\alpha}(X_t)=k$, where $k$ is a positive constant. Then
\[
\zeta_{\alpha}'(X_t)=0.
\]
Using the relationship among $\zeta_{\alpha}(X_t)$, the hazard rate, and the mean
residual life function given in \eqref{relationship}, we obtain
\[
\bigl((2-\alpha)k - m(t)\bigr)\,h(t)=0.
\]
 it follows that
\[
m(t)=(2-\alpha)k,
\]
which is a constant. It is well known that a constant mean residual life function
characterizes the exponential distribution. Therefore, $X$ follows an
exponential distribution with parameter $\theta$.
\\  Conversely assume that $X\sim exp(\theta)$, then
  \[
  \zeta_{\alpha}(X_t) = \frac{1}{\theta(2-\alpha)}, \]
a constant. Hence it is clear that $\zeta_{\alpha}(X_t)$ is independent of $t$ if and only if $X$ has exponential distribution.  \end{proof}
  \begin{flushleft}
    The following theorem provides a characterization result in terms of the relationship connecting the $\zeta_{\alpha}(X_t)$ and the mean residual life function for the generalised Pareto distribution (GPD) .
\end{flushleft}
\begin{theorem}
    Let $X$ be a non-negative random variable with an absolutely continuous survival function $\bar{F}(x)$ and a mean residual life function $m(t)$, then the relationship
    \begin{align}\label{thm4.2}
     \zeta_{\alpha}(X_t)=k~m(t);
    \end{align}
    where $k=\frac{a+1}{(a+1)(2-\alpha)+a}$
    holds for every $t>0$, if and only if X follows GPD.
\end{theorem}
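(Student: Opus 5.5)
The plan is to prove the two directions separately. The \emph{if} direction reduces, through the differential identity \eqref{ACRM't}, to showing that $m(t)$ is linear in $t$, and the \emph{only if} direction is a direct computation. The tool for the \emph{if} direction is the standard identity $m'(t)=m(t)h(t)-1$, obtained by differentiating $m(t)\bar F(t)=\int_t^\infty \bar F(u)\,du$. It gives
\[
h(t)=\frac{1+m'(t)}{m(t)}.
\]
I will also use the classical fact that a linear mean residual life function $m(t)=At+B$, with $A>-1$ and $B>0$, characterizes the generalized Pareto family. This family has survival function $\bar F(x)=\bigl(B/(Ax+B)\bigr)^{(A+1)/A}$, with the exponential distribution as the limit $A\to0$.

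\emph{Sufficiency (the relation implies GPD).} Assume $\zeta_{\alpha}(X_t)=k\,m(t)$ for all $t>0$.
\begin{itemize}
\item Differentiate and substitute into \eqref{ACRM't}, with $h=(1+m')/m$. The factor $m(t)$ cancels and we get
\[
(\alpha-1)k\,m'(t)=\bigl((2-\alpha)k-1\bigr)\bigl(1+m'(t)\bigr).
\]
\item This is linear in $m'(t)$ with constant coefficients. Hence $m'(t)$ equals the constant
\[
A=\frac{(2-\alpha)k-1}{(\alpha-1)k-(2-\alpha)k+1},
\]
provided the denominator does not vanish.
\item Therefore $m(t)=At+B$ with $B=m(0)=\mu$, and by the characterization above $X$ follows a GPD.
\item Finally, express $k$ through the shape parameter to match the stated form $k=\frac{a+1}{(a+1)(2-\alpha)+a}$. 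Inverting the formula for $A$ gives $k$ in terms of $A$; the paper's parameter $a$ is then the appropriate reparametrization of $A$.
\end{itemize}

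\emph{Necessity (GPD implies the relation).} Take the GPD survival function in the paper's parametrization, with linear $m(t)$.
\begin{itemize}
\item The ratio $\bar F(x)/\bar F(t)$ is a pure power of $(\text{linear in }x)/(\text{linear in }t)$. So $\int_t^\infty(\bar F(x)/\bar F(t))^{2-\alpha}dx$ evaluates in closed form to $m(t)$ divided by a constant depending on $\alpha$ and the shape.
\item Subtract $m(t)$ and divide by $\alpha-1$. The factor $(\alpha-1)$ cancels in the numerator, leaving $\zeta_\alpha(X_t)=k\,m(t)$.
\item This requires the exponent $(2-\alpha)(\text{shape})$ to exceed $1$, i.e. the integral must converge, and that restriction should be recorded.
\end{itemize}

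\emph{Expected obstacles.} The main difficulty is bookkeeping of the parametrization rather than the analysis. With the natural form $\bar F(x)=(B/(Ax+B))^{(A+1)/A}$, a direct computation gives
\[
k=\frac{A+1}{(2-\alpha)(A+1)-A}.
\]
The stated $k$ then corresponds to $A=-a$ or a similar sign convention. I will therefore fix the GPD parametrization first, for instance $\bar F(x)=\bigl(1+\tfrac{ax}{b}\bigr)^{-(a+1)/a}$ up to that sign, so that the stated $k$ comes out exactly. I will also check the degenerate cases: the denominator in $A$ vanishing, which corresponds to $k$ values not attained, and $A=0$, the exponential case, which is consistent with Theorem 4.1.
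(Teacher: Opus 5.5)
Your overall route matches the paper's: differentiate $\zeta_\alpha(X_t)=k\,m(t)$, eliminate $h$ via $h=(1+m')/m$, conclude that $m'$ is constant, invoke the linear-MRL characterization of the GPD, and verify the converse by direct integration. The problem is the identity you feed into it. Equation \eqref{ACRM't} has a spurious factor $(\alpha-1)$ on the left. The correct relation is \eqref{relationship}, $\zeta_\alpha'(X_t)=\bigl((2-\alpha)\zeta_\alpha(X_t)-m(t)\bigr)h(t)$, and that is the one the paper's proof uses. To see this, put $J(t)=(\bar F(t))^{\alpha-2}\int_t^\infty(\bar F(x))^{2-\alpha}dx$. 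Then $J'=(2-\alpha)hJ-1$ and $m'=hm-1$, and using $J=(\alpha-1)\zeta_\alpha(X_t)+m$,
\[
(\alpha-1)\zeta_\alpha'(X_t)=J'-m'=h\bigl((2-\alpha)J-m\bigr)=(\alpha-1)\,h\bigl((2-\alpha)\zeta_\alpha(X_t)-m\bigr).
\]
So your displayed equation should read $k\,m'(t)=\bigl((2-\alpha)k-1\bigr)\bigl(1+m'(t)\bigr)$, which gives $A=\frac{(2-\alpha)k-1}{(\alpha-1)k+1}$.

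The qualitative conclusion ($m'$ constant, so $m$ linear) survives either way. The bookkeeping you single out as the main task does not. Inverting your formula for $A$ gives $k=\frac{A+1}{(2-\alpha)+(3-2\alpha)A}$. This contradicts your own, correct, converse computation $k=\frac{A+1}{(2-\alpha)(A+1)-A}$ for every $A\neq 0$. For example, for the uniform law ($A=-\tfrac12$) the true constant is $k=\frac{1}{3-\alpha}$, whereas your inversion yields $k=1$. With the corrected identity, inversion gives exactly $\frac{A+1}{(2-\alpha)(A+1)-A}$, so the two directions agree.

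Separately, the stated $k=\frac{a+1}{(a+1)(2-\alpha)+a}$ is not a reparametrization effect. Setting $A=-a$ produces $\frac{1-a}{(2-\alpha)(1-a)+a}$, and matching the stated form would require the unnatural choice $A=-a/(2a+1)$. The paper's own proof takes $m(t)=b+at$ and arrives at $k=\frac{a+1}{(2-\alpha)(a+1)-a}$, which matches your computation. The $+a$ in the statement is therefore a sign slip, and you should not contort the GPD parametrization to reproduce it.

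Your convergence restriction for the converse, $(2-\alpha)(A+1)>A$ when $A>0$, is something the paper omits and is worth keeping.
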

\begin{proof}
Suppose that \eqref{thm4.2} holds. Differentiating both sides with respect to $t$, we obtain
\[
\zeta_{\alpha}'(X_t)=k\,m'(t).
\]
From \eqref{relationship}, it follows that
\begin{equation}\label{proof4.2}
\bigl((2-\alpha)k-1\bigr)m(t)h(t)=k\,m'(t).
\end{equation}
Further, the hazard rate and the mean residual life function satisfy
\[
h(t)=\frac{1+m'(t)}{m(t)}.
\]
Substituting this expression for $h(t)$ into \eqref{proof4.2}, we obtain
\[
k\,m'(t)=\bigl((2-\alpha)k-1\bigr)\bigl(1+m'(t)\bigr),
\]
which yields
\[
m'(t)=\frac{(2-\alpha)k-1}{\,k-(2-\alpha)k+1\,},
\]
a constant. Hence, $m(t)$ is linear in $t$.

It is well known that a linear mean residual life function characterizes the
generalized Pareto distribution (GPD). Therefore, $X$ follows a GPD.

Conversely, assume that $X\sim\mathrm{GPD}$. By direct calculation, we obtain
\[
\zeta_{\alpha}(X_t)=k(b+at)=k\,m(t),
\]
where
\[
k=\frac{a+1}{(2-\alpha)(a+1)-a}.
\]
This completes the proof.
\end{proof}

\section{New classes of lifetime distributions}
 In this section, two new classes of lifetime distribution in terms of dynamic cumulative residual Mathai-Haubold entropy of order $\alpha$, $\zeta_{\alpha}(X_t)$ is given.\par

\begin{definition}
A random variable $X$ is said to have an increasing (decreasing) DCRMHE of order $\alpha$,  denoted by  IDCRMHE (DDCRMHE), if $\zeta_{\alpha}(X_t)$ increases (decreases) as $t$ increases for $t\geq 0$, which is same as $\zeta'_\alpha(X_t)\geq (\leq)0$.
\end{definition}

\begin{definition}
A random variable $X$ is said to have an increasing (decreasing) failure rate IFR (DFR), if $h(t)$ increases (decreases) as $t$ increases,  $t\geq 0$.
\end{definition}
\begin{flushleft}
 The following theorem provides bound in terms of $h(t)$ for $\zeta_\alpha(X_t)$.
\end{flushleft}
\begin{theorem}
    A random variable $X$ with distribution function $F$ possesses an increasing (decreasing) DCRMHE of order $\alpha$ if and only if for all $t>0$
  \[
    \]
    \[
    (i) \zeta_{\alpha}(X_t)\geq(\leq)\frac{m(t)h(t)}{2-\alpha}, for~ 1<\alpha<2
    \].
    \[
    (ii) \zeta_{\alpha}(X_t)\leq(\geq)\frac{m(t)h(t)}{2-\alpha}, for ~ 0<\alpha<1.
    \]

\begin{proof}
    The proof of the theorem directly follows from (\ref{relationship}) and Definition 5.1.
\end{proof}
\begin{flushleft}
\end{flushleft}
\end{theorem}
\begin{theorem}
    The uniform distribution over $(a,b),a<b$ can be distinguished by DDCRMHE of order $\alpha$ for $1<\alpha<2$ and IDCRMHE of order $\alpha$ for $0<\alpha<1$.
    \begin{proof}
        Let $X \sim U(a,b)$.\\ Then
        \[
        \bar{F}(x)=\frac{b-x}{b-a};a<x<b.
        \]
      \[
      \zeta_{\alpha}(X_t)=\frac{1}{\alpha-1}\left( {\int_t^b} \left(\frac{b-x}{b-t} \right)^{2-\alpha}\,dx - {\int_t^b} \left(\frac{b-x}{b-t}\right) \,dx \right)
      \]
      \[
   \quad   =\frac{1}{\alpha-1}\left(-\frac{(b-t)}{\alpha-3}-(b-t)\right).
      \]
      Differentiating with respect to $t$, we get
       \[
       \zeta'_{\alpha}(X_t)=\frac{\alpha-2}{(\alpha-1)(\alpha-3)}.
       \]
       It is clear that, when $0<\alpha<1$ \[\zeta'_{\alpha}(X_t)<0\]
       and when $ 1<\alpha<2$ \[\zeta'_{\alpha}(X_t)>0.
       \]
       Hence the theorem.
        \end{proof}
\end{theorem}
\begin{flushleft}
    The following theorem shows that the only distribution that is both IDCRMHE and DDCRMHE of order $\alpha$ is exponential.
\end{flushleft}
       \begin{theorem}
        Consider a random variable $X$ holding both IDCRMHE and DDCRMHE of order $\alpha$, then $X$ follows an exponential distribution.
        \begin{proof}
            If $X$ is DDCRMHE (IDCRMHE) of order $\alpha$, then  \[
             \zeta'_{\alpha}(X_t)\leq0(\geq0).
            \]
            From the above  inequalities, we get \[
            \zeta_{\alpha}(X_t)=k, \\\ a \\\ constant.
            \]

            By Theorem 4.1, constancy of $\zeta_{\alpha}(X_t)$ means that $X$ follows exponential distribution. This completes the proof.
        \end{proof}

       \end{theorem}



\section{Non-parametric Kernel estimation}
Let $X_1,X_2,...,X_n$ be a random sample taken from a population with distribution function
$F$. Based on kernel density estimation, we construct non-parametric estimators for the proposed measures and let us assume that the kernel function $k(x)$ satisfies the following conditions:
\begin{itemize}
    \item[1.] $k(x) \geq 0$, for all $x$
    \item[2.] $\int {k(x)} dx =1$
    \item[3.] $k(.)$ is symmetric.
\end{itemize}
 The probability density function $f(x)$ at a point $x$ can be estimated using the kernel density estimator given by (Parzen (1962))
\begin{equation}\label{f x}
    f_n(x)=\frac{1}{nh}\sum_{j=1}^{n}k\left(\frac{x-X_j}{h}\right),  h ~is ~the ~ bandwidth.
\end{equation}

Since our measure is defined based on survival functions, we choose a kernel based estimator for the survival function, which is given by
\begin{equation}\label{F x}
    \bar{F}(x)=\frac{1}{n}\sum_{j=1}^{n}\bar{K}\left(\frac{x-X_j}{h}\right),
\end{equation}
where $\bar{K}$ be the survival function of the kernel $k$ and $\bar{K}(t)=\int_{t}^{\infty}k(u)du$. \par

\sloppy
Non-parametrically, the CRMHE of order $\alpha$, $\zeta_{\alpha}(X)$ can be estimated using a kernel-based approach, defined as below

\begin{align}\label{CRMX hat}
    \widehat  {\zeta_{\alpha}}(X)=\frac{1}{\alpha-1}\left[\int_{0}^{\infty}\left(\frac{1}{n}\sum_{j=1}^{n}\bar{K}\left(\frac{x-X_j}{h}\right)\right)^{2-\alpha}dx-\int_{0}^{\infty}\left(\frac{1}{n}\sum_{j=1}^{n}\bar{K}\left(\frac{x-X_j}{h}\right)\right)dx\right].
\end{align}
Similarly, the estimator of DCRMHE of order $\alpha$, is as follows
\begin{align}\label{CRMt hat}
    \widehat  {\zeta_{\alpha}}(X_t)=\frac{1}{\alpha-1}\left[\int_{t}^{\infty}\left(\frac{\sum_{j=1}^{n}\bar{K}\left(\frac{x-X_j}{h}\right)}{\sum_{j=1}^{n}\bar{K}\left(\frac{t-X_j}{h}\right)}\right)^{2-\alpha}dx-\int_{t}^{\infty}\left(\frac{\sum_{j=1}^{n}\bar{K}\left(\frac{x-X_j}{h}\right)}{\sum_{j=1}^{n}\bar{K}\left(\frac{t-X_j}{h}\right)}\right)dx \right].
\end{align}
Next, we examined the consistency of the proposed estimators. The kernel based estimator of the cumulative distribution function $F(x)$, established by Berg and Politis (2009), is consistent and is defined as
\begin{equation}\label{F hat}
    \widehat{F}_h(x)=\int_{-\infty}^{t}\hat{f}(t)dx=\frac{1}{n}\sum_{j=1}^{n}\tilde{K}\left(\frac{t-X_j}{h}\right),
\end{equation}

where $\tilde{K}(t)=\int_{0}^{t}k(u)du$.\par

 They have also given the expression for the variance of $\hat{F}_h(t)$ to establish the consistency, which is given by
\begin{align}\label{var}
    Var(\widehat{F}_h(t))=\frac{F(t)(1-F(t))}{n}-\frac{2f(t)}{n}\left(\int u\tilde{K}(u)k(u)du\right)h+O\left(\frac{h}{n}\right).
\end{align}
Under certain assumptions, if $h\to 0$ and $nh \to \infty$ as $n \to \infty$, then $Var(\widehat{F}_h(t)) \to 0$ and hence the consistency of $\widehat{F}_h(t)$ is established. In order for the consistency of our proposed estimator to be proved, we need the following assumptions.\\
Let $\phi(t)$ denote the characteristic function of $X$:
\begin{itemize}
    \item[(A)] There is a $p >0$ such that $\int_{-\infty}^{\infty}|t|^{p}|\phi(t)|< \infty$.
    \item[(B)] There are positive constants $d$ and $D$ such that $|\phi(t)| \leq De^{-d|t|}$.
    \item[(C)] There is a positive constant $b$ such that $\phi(t)=0$ for all $|t| \geq b$.
\end{itemize}
    The consistency of the estimators has to be proved next. For this, first we prove the consistency of $\widehat{\bar{F}}(t)$. By direct calculation, it can be shown that
    \begin{equation*}
        \tilde{K}(t)=1-\int_t^\infty k(u)du=1-\bar{K}(t).
    \end{equation*}
    \begin{equation*}
    so \\\ \widehat{F}(t)=\frac{1}{n}\sum_{j=1}^{n}\left(1-\bar{K}\left(\frac{t-X_j}{h}\right)\right)=1-\hat{\bar{F}}(t).
    \end{equation*}
    Therefore, (\ref{var}) becomes
    \begin{align*}
     Var(\widehat{\bar{F}}_h(t))=\frac{\bar{F}(t)(1-\bar{F}(t))}{n}-\frac{2f(t)}{n}\left(\int u(1-\bar{K}(u))k(u)du\right)h+O\left(\frac{h}{n}\right)
    \end{align*}
    \begin{equation}\label{var final}
        That \\\ is,\\\\ \\\
        Var(\widehat{\bar{F}}_h(t))=\frac{\bar{F}(t)(1-\bar{F}(t))}{n}+\frac{2f(t)}{n}\left(\int u\bar{K}(u)k(u)du\right)h+O\left(\frac{h}{n}\right).
    \end{equation}
    It is clear from (\ref{var final}) that $\widehat{\bar{F}}_h(t)$ is a consistent estimator of $\bar{F}(t)$. Hence from (\ref{CRMX hat}), we can see that $\widehat  {\zeta_{\alpha}}(X)$ is a consistent estimator of $  {\zeta_{\alpha}}(X)$ and from (\ref{CRMt hat}), $\widehat  {\zeta_{\alpha}}(X_t)$ is a consistent estimator of $  {\zeta_{\alpha}}(X_t)$.\par

 \section {Simulation studies}
In this section, we study the Monte Carlo simulation of the estimators $ \widehat{\zeta}_{\alpha}(X)$ and $ \widehat{\zeta}_{\alpha}(X_t)$. The finite-sample performance of the proposed estimator can be evaluated by analyzing its bias and mean squared error under different sample sizes and distributions, which is the aim of the Monte Carlo simulation study. For performing this simulation, we use the R software and, for various sample sizes $n=30, 50, 70, 90$, the experiment is repeated 10,000 times. Here, the random variable $X$ is generated from two different lifetime random variables, namely, Weibull and uniform. For different values of $\alpha$, we randomly select the parameters and various sample sizes are used. The kernel survival estimator is used to estimate the proposed measure. Silverman's thumb rule is used to choose the bandwidth, and is given by
\[
h=1.06 \widehat\sigma {n}^ {-1/5},
\] where $\widehat\sigma$ be the standard deviation of $n$ samples taken. The estimates of $CRMHE$ and $DCRMHE$ of order $\alpha$ are calculated based on equations (\ref{CRMX hat}) and (\ref{CRMt hat}), and then the bias and MSE are also calculated.\par

 The bias and MSE of the CRMHE of order $\alpha$ estimator for different distributions are given in Table \ref{CRMHE alpha=1.5} for $\alpha=1.5$. Table \ref{CRMHE alpha=1.5} gives clear evidence that the estimator shows lower bias and MSE for uniform random samples compared to other distributions. Similarly, Table \ref{CRMHE alpha=0.5} gives the bias and MSE of the CRMHE of order $\alpha$ estimator for different distributions when $\alpha=0.5$. In Table \ref{CRMHE alpha=0.5}, we can see that the CRMHE of order $\alpha$ estimator performs better in the case of uniform samples compared to other distributions when $\alpha=0.5$. From Table \ref{CRMHE alpha=1.5} and Table \ref{CRMHE alpha=0.5}, it is clear that the bias and MSE decrease as $n$ increases.

\begin{table}[h!]
    \centering
    \caption{Bias and MSE of the CRMHE of order $\alpha$ estimator for various distributions when $\alpha=1.5$.}
    \label{CRMHE alpha=1.5}

\begin{tabular}{|c|cc|cc|}
\toprule
$n$ & \multicolumn{2}{c|}{$X \sim \text{Weibull}(5,1)$ } &  \multicolumn{2}{c|}{$X \sim \text{Uniform}(1.25,1.75)$}  \\

\cmidrule{1-5}

& Bias & MSE & Bias & MSE \\

\midrule
30 &0.0351& 0.0028& 0.0439& 0.0022\\

50& 0.0304& 0.0019 &0.0387& 0.0017\\

70& 0.0273& 0.0014&0.0355& 0.0014\\
90 &0.0252& 0.0011&0.0333& 0.0012 \\
\bottomrule
\end{tabular}
\end{table}

\begin{table}[h!]
\centering
\caption{Bias and MSE of the CRMHE of order $\alpha$ estimator for various distributions when $\alpha=0.5$.}
    \label{CRMHE alpha=0.5}

\begin{tabular}{|c|cc|cc|}
\toprule
$n$ & \multicolumn{2}{c|}{$X \sim \text{Weibull}(5,1)$ }  &  \multicolumn{2}{c|}{$X \sim \text{Uniform}(0.5,1)$}  \\
\cmidrule{1-5}

& Bias & MSE & Bias & MSE \\
\midrule
 30& 0.0168& 0.0007& 0.0117& 0.0002\\

 50& 0.0145& 0.0004&0.0101& 0.0002\\
70& 0.0131& 0.0003& 0.0092& 0.0001 \\
 90& 0.0120& 0.0003&0.0084& 0.0000 \\
 \bottomrule
\end{tabular}
\end{table}

Next, we study the performance of the DCRMHE of order $\alpha$ estimator for different values of $t$ and $n$. Table \ref{DCRMHE alpha=1.5} gives the bias and MSE of the DCRMHE of order $\alpha$ estimator when $X \sim \text{Weibull}(5,3)$  with $\alpha=1.5$. Similarly, Table \ref{DCRMHE alpha=0.5} gives the bias and MSE of the DCRMHE of order $\alpha$ estimator when $X \sim \text{Weibull}(5,1)$ with $\alpha=0.5$. It is evident from Tables \ref{DCRMHE alpha=1.5} and \ref{DCRMHE alpha=0.5} that the DCRMHE of order $\alpha$ estimator shows lower bias and MSE for weibull samples than for the other distributions. From Table \ref{DCRMHE alpha=1.5} and Table \ref{DCRMHE alpha=0.5}, it is clear that the bias and MSE decrease as the sample size $n$ increases.

\begin{table}[h!]
\centering
\caption{Bias and MSE of the DCRMHE of order $\alpha$ estimator across different $t$ and $n$ values for ${X \sim \text{Weibull}(5,3)}$ with $\alpha=1.5$}.
    \label{DCRMHE alpha=1.5}
\begin{tabular}{|c|c|cc|}
\toprule
$t$ & $n$ & \multicolumn{2}{c|}{$X \sim \text{Weibull}(5,3)$ } \\
\midrule
&  &Bias & MSE \\
\midrule
     &30 &0.1028 &0.0246\\
0.50      &50 &0.0895 &0.0164 \\
 &70 &0.0806 &0.0125 \\
     &90 &0.0745&0.0101 \\

     \midrule
     &30 &0.1001 &0.0235 \\
 0.75    &50 &0.0873 &0.0157 \\
 &70 &0.0788 &0.0120\\
     &90 &0.0728 &0.0098 \\

     \midrule
     &30 &0.0963 &0.0219 \\
     &50 &0.0842 &0.0147 \\
1.00 &70 &0.0760&0.0113 \\
     &90 &0.0704&0.0092\\

     \bottomrule
\end{tabular}
\end{table}

\begin{table}[h!]
\centering
\caption{Bias and MSE of the DCRMHE of order $\alpha$ estimator across different $t$ and $n$ values for ${X \sim \text{Weibull}(5,1)}$ with $\alpha=0.5$}.
    \label{DCRMHE alpha=0.5}
\begin{tabular}{|c|c|cc|}
\toprule
$t$ & $n$ & \multicolumn{2}{c|}{$X \sim \text{Weibull}(5,1)$ }  \\
\midrule
&  &Bias & MSE \\
\midrule
       &30 &0.0124 &0.0004   \\
0.50       &50 &0.0109 &0.0003   \\
 &70 & 0.0099 &0.0002  \\
       &90 &0.0091 & 0.0002  \\

       \midrule
       &30 &0.0113 & 0.0003  \\
       0.75&50 &0.0098 &0.0002   \\
        &70 &0.0088&0.0001 \\
       &90 &0.0082&0.0001   \\

       \midrule
       &30 &0.0121&0.0003   \\
  1.00     &50 &0.0105&0.0002   \\
 &70 &0.0095 &0.0002   \\
       &90 &0.0088 &0.0001   \\

        \bottomrule

\end{tabular}
\end{table}

\section{Data analysis}

This section deals with the two real-life datasets to analyze the proposed estimator given in (\ref{CRMt hat}). First, we consider the data consisting of  70 failure times of aircraft windshields measured in units of 1000 hours used by Helu et al. (2020).
    We estimate DCRMHE for the dataset with $\alpha=1.5$ and  evaluate its efficiency by comparing its estimated and theoretical values for different values of $t$. 
   The bias and mean squared error (MSE) of DCRMHE of order $\alpha$ are given in Table \ref{DCRMHEalpha=1.5} using the kernel estimator defined in equation (\ref{CRMt hat}), based on 10,000 bootstrap samples of size 70 for $\alpha=1.5$. From Table \ref{DCRMHEalpha=1.5}, it is clear that the proposed estimator and its theoretical values are closely matched, indicating that the accuracy of the estimation improves. This reveals that the level of uncertainty related to failure time is reduced for higher values of $t$. Further, we can see that compared to the true value, the bias is moderately small . From fig.\ref{fig:graph 1}, it is clear that the plot of $ \zeta_{\alpha}(X_t) $and $ \widehat{\zeta}_{\alpha}(X_t)$ against $t$ showing their decreasing trend for $\alpha=1.5$.

 \begin{table}[h!]
\centering
\caption{Bias and MSE of estimator for DCRMHE for different values of $t$ when $\alpha=1.5$}.
\label{DCRMHEalpha=1.5}
\begin{tabular}{c| c c c c}
\hline
$t$& \(\zeta_{\alpha}(X_t)\)& $ \widehat{\zeta}_{\alpha}(X_t)$& Bias & MSE \\
\hline
0.9 & 1.2925 & 1.0257&-0.0361&0.0302\\
1.0 & 1.2832&1.0120 &-0.0397& 0.0306\\

1.1 &  1.2716& 0.9964&-0.0429& 0.0310\\

1.2 &1.2579& 0.9788 &-0.0457 &0.0315\\

1.3 & 1.2424&0.9593 &-0.0483& 0.0321\\
\hline
\end{tabular}
\end{table}

\begin{figure}
    \centering
    \includegraphics[width=0.7\linewidth]{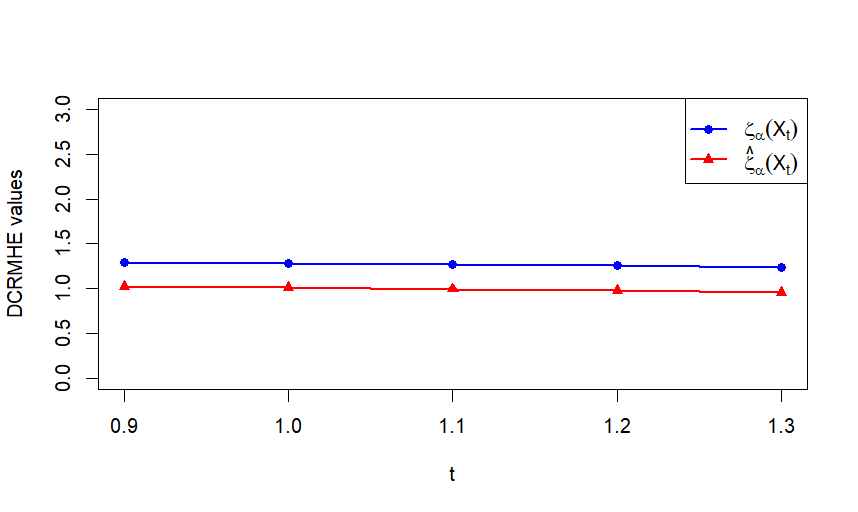}
    \caption{Comparison of $ \zeta_{\alpha}(X) $and $ \widehat{\zeta}_{\alpha}(X_t)$  values when $\alpha=1.5$ for different $t$}.
    \label{fig:graph 1}
\end{figure}
Next, we consider the failure times data (measured in millions of operations) of 40 randomly selected mechanical switches used by Nair (1984) . Here we fit a Weibull model to the data and the fit is checked using Kolmogrov-Smirnov (KS) test. We fit the Weibulll distribution on the shape parameter $k= 3.85819$ and the scale parameter $\lambda=2.3409$. For $\alpha=1.5$, the DCRMHE was estimated for the dataset. The bias and mean squared error (MSE) of DCRMHE are given in Table \ref{data 2} using the kernel estimator defined in equation (\ref{CRMt hat}), based on 10,000 bootstrap samples of size 40. From Table \ref{data 2}, it is clear that the proposed estimator and its theoretical values are closely matched, indicating that the accuracy of the estimation improves. From fig.\ref{fig:graph 2}, it is clear that the plot of $ \zeta_{\alpha}(X_t) $and $ \widehat{\zeta}_{\alpha}(X_t)$ against $t$ showing their decreasing trend for $\alpha=1.5$.

\begin{table}[h!]
\centering
\caption{Bias and MSE of estimator for DCRMHE for different values of $t$ when $\alpha=1.5$}.
\label{data 2}
\begin{tabular}{c| c c c c}\hline
$t$& \(\zeta_{\alpha}(X_t)\)& $ \widehat{\zeta}_{\alpha}(X_t)$ & Bias & MSE \\
\hline
0.9 &  0.8081 &0.9591&0.0624 &0.0129\\
1.0 & 0.7980&0.9495 &0.0605& 0.0124\\

1.1 &  0.7858& 0.9370&0.0588& 0.0120\\

1.2 & 0.7716&0.9218 &0.0573 &0.0116\\

1.3 &0.7554&0.9043 &0.0559& 0.0113\\
\hline
\end{tabular}
\end{table}

\begin{figure}
    \centering
    \includegraphics[width=0.7\linewidth]{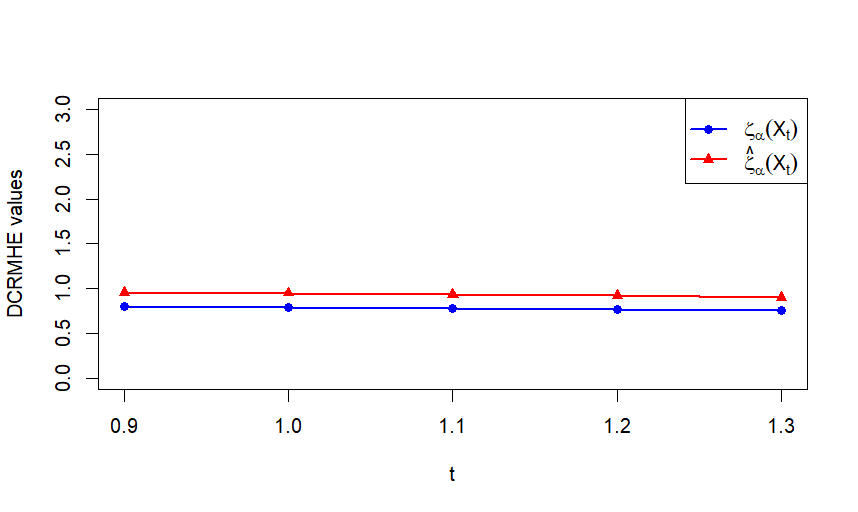}
    \caption{Comparison of $\alpha$ \(\zeta_{\alpha}(X)\) and $ \widehat{\zeta}_{\alpha}(X_t)$ values when $\alpha=1.5$ for different $t$}
    \label{fig:graph 2}
\end{figure}

\section{Conclusion}
In this paper, we studied the properties of the cumulative residual Mathai-Haubold entropy (CRMHE) and then propose a dynamic extension, DCRMHE, showing that it uniquely determines the distribution. Next, we studied the relationship connecting the hazard rate and the mean residual life function with DCRMHE and characterized some lifetime distributions. We also study the non-parametric estimators of CRMHE and DCRMHE based on the kernel density estimation of survival function, and their performance is assessed through a Monte Carlo simulation study. We used two real datasets on 70 failure times of aircraft windshields and failure times of 40 randomly selected mechanical switches, to know the relevance of the proposed DCRMHE estimator.
\section{References}
\begin{enumerate}

\item Alnssyan, B. S., \& Dar, J. G. (2026). Cumulative generalized entropies of Mathai--Haubold type and moment functions of order statistics. \textit{Mathematics}, 14, 3094.
\item Anija, C. R., Smitha, S., \& Kattumannil, S. K. (2025).
The cumulative residual Mathai--Haubold entropy and its nonparametric inference.
\item Asadi, M., \& Zohrevand, Y. (2007). On the dynamic cumulative residual entropy. \textit{Journal of Statistical Planning and Inference}, 137(6), 1931--1941.
\item Berg, A., \& Politis, D. N. (2009). Higher-order accurate bootstrap for studentized kernel density estimators. \textit{Journal of Statistical Planning and Inference}, 139(4), 1488--1496.
\item Cox, D. R. (1972). Regression models and life‐tables. \textit{Journal of the Royal Statistical Society}, 34, 187--202.
\item Dar, J. G. \& Al-Zahrani, B. (2013). On some characterization results of lifetime distributions using Mathai-Haubold residual entropy. \textit{IOSR Journal of Mathematics}, 5(4), 56--60.
\item Ebrahimi, N. (1996). How to measure uncertainty in  the residual lifetime distribution. \textit{Sankhya:The Indian Journal of Statistics, Series A}, 58(1), 48--56.
\item Golomb, S. (1966). The information generating function of probability distribution. \textit{IEEE Transactions on Information Theory} 12, 75-79.
\item Hall, W. J., \& Wellner, J. A. (1980). Mean residual life. \textit{In Statistics and related topics}, 169-–184. Ottawa, ON: North-Holland, Amsterdam.
\item Helu, A., Samawi, H., Rochaani, H., Yin, J., \& Vogel, R. (2020). Kernel density estimation based on progressive-II censoring. \textit{Journal of the Korean Statistical Society}, 49(2), 475--498.
\item    Mathai, A. M., \& Haubold, H. J. (2006). Pathway model, superstatistics, Tsallis statistics, and a generalized measure of entropy. \textit{Physica A:Statistical Mechanics and its Applications}, 375(1), 110--122.
\item Parzen, E. (1962). On estimation of a probability density function and mode. \textit{The Annals of Mathematical Statistics}, 33(3), 1065--1076.
\item Rao, M., Chen, Y., Vemuri, B.C., \& Wang, F. (2004). Cumulative residual entropy:A new measure of information. \textit{IEEE Transactions on Information Theory}, 50(6), 1220--1228.

\item Sati, M. M., \& Gupta, N. (2015). Some characterization results on dynamic cumulative residual Tsallis entropy. \textit{Journal of Probability and Statistics}, 1--8.
\item   Shannon, C. E. (1948). A mathematical theory of communication. \textit{Bell System Technical Journal}, 27(3), 379--423.
\item Smitha, S., Kattummannil, S. K., \& Sreedevi, E. P. (2023). Dynamic cumulative residual entropy generating function and its properties. \textit{Communications in Statistics - Theory and Methods}, 5890--5909.
\item Smitha, S., Rajesh, G., \& Jayalekshmi, S. (2024). On residual entropy generating function. \textit{Journal of the Indian Statistical Association}, 62(1), 81--93.%
\item Tsallis, C. (1988). Possible generalization of Boltzmann-Gibbs statistics. \textit{Journal of Statistical Physics}, 52,  479--487.

\end{enumerate}

\end{document}